\numberwithin{equation}{section}
\theoremstyle{plain}
\newtheorem{theorem}{Theorem}[section]
\newtheorem{lemma}[theorem]{Lemma}
\newtheorem{proposition}[theorem]{Proposition}
\theoremstyle{definition}
\newtheorem{definition}[theorem]{Definition}
\theoremstyle{remark}
\newtheorem{remark}[theorem]{Remark}
\newtheorem{case[theorem]}{Case}
\def \C{{\mathbb C}}
\def\supp{\hbox{supp\,}}
\def\norm#1.#2.{\lVert#1\rVert_{#2}}
\title[Decay estimates for a class of semigroups on metric measure spaces]{Decay estimates for a class of semigroups related to self-adjoint operators on metric measure spaces
}
\author{Guoxia Feng}
\author{Manli Song}
\thanks{Manli Song is the corresponding author.}
\author{Huoxiong Wu}
\address{\endgraf School of Mathematical Sciences, Xiamen University, Xiamen 361005, China}
\address{\endgraf School of Mathematics and Statistics, Northwestern Polytechnical University, Xi'an, Shaanxi 710129, China}
\email{gxfeng@mail.nwpu.edu.cn}
\address{\endgraf Research\&Development Institute of Northwestern Polytechnical University in Shenzhen, Sanhang Science\&Technology Building, No. 45th, Gaoxin South 9th Road, Nanshan District, Shenzhen City, 518063}
\address{\endgraf School of Mathematics and Statistics, Northwestern Polytechnical University, Xi'an, Shaanxi 710129, China}
\email{mlsong@nwpu.edu.cn}
\address{\endgraf School of Mathematical Sciences, Xiamen University, Xiamen 361005, China}
\email{huoxwu@xmu.edu.cn}
\keywords{Decay estimates; non-negative self-adjoint operators; metric measure space; Hermite operator; twisted Laplacian; Laguerre operator; Strichartz estimates.}
\subjclass[2010]{Primary 42B37, 47D08.}
\date{\today}
\begin{document}
	
	\maketitle

	\allowdisplaybreaks

	\begin{abstract}
		Assume that $(X,d,\mu)$ is a metric space endowed with a non-negative Borel measure $\mu$ satisfying the doubling condition and the additional condition that $\mu(B(x,r))\gtrsim r^n$ for any $x\in X, \,r>0$ and some $n\geq1$. Let $L$ be a non-negative self-adjoint operator on $L^2(X,\mu)$. We assume that $e^{-tL}$ satisfies a Gaussian upper bound and the Schr\"odinger operator $e^{itL}$ satisfies an $L^1\to L^\infty$ decay estimate of the form
		\begin{equation*}
			\|e^{itL}\|_{L^1\to L^\infty} \lesssim |t|^{-\frac{n}{2}}.
		\end{equation*}
		Then for a general class of dispersive semigroup $e^{it\phi(L)}$, where $\phi: \mathbb{R}^+ \to \mathbb{R}$ is smooth, we establish a similar $L^1\to L^\infty$ decay estimate by a suitable subordination formula connecting it with the Schr\"odinger operator $e^{itL}$. As applications, we derive new Strichartz estimates for several dispersive equations related to Hermite operators, twisted Laplacians and Laguerre operators.
	\end{abstract}
	\tableofcontents 	
	
	\section{Introduction}
	Let $(X,d,\mu)$ be a metric space endowed with a non-negative Borel measure $\mu$ satisfying the doubling condition: there exists a constant $C>0$ such that
	\begin{equation}\label{doubling-condition}
		\mu(B(x,2r))\leq C\mu(B(x,r)),
	\end{equation}
	for all $x\in X$, $r>0$ and all balls $B(x,r):=\{y\in X:d(x,y)<r\}$. In addition, we shall assume in this paper that
	\begin{equation}\label{addition}
		\mu(B(x,r))\gtrsim r^n,
	\end{equation}
	for all $x\in X$, $r>0$ and for some $n\geq1$.
	
	From the doubling property \eqref{doubling-condition}, it yields a constant $D>0$ such that
	\begin{equation*}
		\mu(B(x,\lambda r))\leq C\lambda^D\mu(B(x,r)),
	\end{equation*}
	for all $\lambda\geq1$, $x\in X$ and $r>0$; and that
	\begin{equation*}
		\mu(B(x,r))\leq C\left(1+\frac{d(x,y)}{r}\right)^{\tilde{n}}\mu(B(y,r)),
	\end{equation*}
	for all $x,y\in X$ and $r>0$.

	Suppose $L$ is a non-negative self-adjoint operator on $L^2(X,\mu)$ and we shall make the following assumptions on $L$:
	
	(A1) The Schr\"odinger operator satisfies an $L^1\to L^\infty$ decay estimate:
	\begin{equation*}
		\|e^{itL}\|_{L^1\to L^\infty} \lesssim |t|^{-\frac{n}{2}},\,|t|<T_0,
	\end{equation*}
	where $T_0\in(0,+\infty]$.
	
	(A2) The kernel $p_t(x,y)$ of $e^{-tL}$ admits a Gaussian upper bound: there exist $C,c>0$ such that for all $x,y\in X$ and $t>0$,
	\begin{equation*}
		|p_t(x,y)|\leq \frac{C}{\mu(B(x,\sqrt{t}))}\exp\left(-\frac{d(x,y)^2}{ct}\right).
	\end{equation*}
	This is frequently the case for many important operators, notably the Laplacian $L=-\Delta$ on the Euclidean space and its potential perturbations, such as the Hermite operator, the twisted Laplacian and the Laguerre operator, ect.
	
	Under (A1)-(A2) assumptions, the key ingredient of this paper is to establish a decay estimate for a general class of dispersive semigroup $e^{it\phi(L)}$, where $\phi: \mathbb{R}^+ \to \mathbb{R}$ is smooth satisfying:
	\\
	
	(H1)~There exists $m_1>0$, such that for any $\alpha \geqslant 2$ and $\alpha \in \mathbb{N}$,
	\begin{equation*}
		|\phi'(r)| \sim r^{m_1-1}  , \quad |\phi^{(\alpha)}(r)| \lesssim r^{m_1-\alpha},\quad r \geqslant 1.
	\end{equation*}
	
	(H2)~There exists $m_2>0$, such that for any $\alpha \geqslant 2$ and $\alpha \in \mathbb{N}$,
	\begin{equation*}
		|\phi'(r)| \sim r^{m_2-1}  , \quad |\phi^{(\alpha)}(r)| \lesssim r^{m_2-\alpha},\quad 0<r<1.
	\end{equation*}
	
	(H3)~There exists $\alpha_1$, such that
	\begin{equation*}
		|\phi''(r)| \sim r^{\alpha_1-2}, \quad r \geqslant1.
	\end{equation*}
	
	(H4)~There exists $\alpha_2>0$, such that
	\begin{equation*}
		|\phi''(r)| \sim r^{\alpha_2-2}, \quad  0<r<1.
	\end{equation*}
	\begin{remark}
		The assumptions (H1)-(H4) root in \cite{GPW2008}. (H1) and (H3) represent the homogeneous order of $\phi$ in high frequency and guarantee $\alpha_1\leq m_1$. Similarly, the homogeneous order of $\phi$ in low frequency is described by (H2) and (H4) which also make sure  $\alpha_2\geq m_2$.
	\end{remark}
	
	In the past decades, Strichartz estimates have been very useful in the study of nonlinear partial differential equations. These estimates in the Euclidean setting have been proved for many dispersive equations, such as the wave equation and Schr\"{o}dinger equation (see \cite{GV, KT, Str}). To obtain Strichartz estimates, it involves basically two types of ingredients. The first one consists in estimating the $L^1\to L^\infty$ decay in time on the evolution group associated with the free equation. The second one consists of abstract arguments, which are mainly duality arguments. Therefore, the $L^1\to L^\infty$ decay estimate plays a crucial role.
	
	In 2008, Guo-Peng-Wang\cite{GPW2008} used a unified way to study the decay for a class of dispersive semigroup $e^{it\phi(\sqrt{-\Delta})}$ on the Euclidean space. Many dispersive wave equations reduce to this type, for instance, the Schr\"{o}dinger equation ($\phi(r)=r^2$), the wave equation ($\phi(r)=r$), the fractional Schr\"{o}dinger equation ($\phi(r)=r^\nu$) ($0<\nu<2$), the Klein-Gordon equation ($\phi(r)=\sqrt{1+r^2}$), the beam equation ($\phi(r)=\sqrt{1+r^4}$) and the fourth-order Schr\"{o}dinger equation ($\phi(r)=r^2+r^4$), etc. When $\phi$ is a homogeneous function of order $m$, namely, $\phi(\lambda r)=\lambda^m\phi(r),\,\forall \lambda>0$, the dispersive estimate can be easily obtained by a theorem of Littman and dyadic decomposition. However, it becomes very complicated when $\phi$ is not homogeneous since the scaling constants can not be effectively separated from the time.  To overcome the difficulty, they applied frequency localization by separating $\phi$ between high and low frequency in different scales. Guo-Peng-Wang\cite{GPW2008} assumed that $\phi: \mathbb{R}^+ \to \mathbb{R}$ is smooth satisfying (H1)-(H4) and obtained the $L^1\to L^\infty$ decay estimates for the semigroup $e^{it\phi(\sqrt{-\Delta})}$ which requires elaborate techniques involving oscillatory integrals, the vanishing properties of Bessel functions at the origin and the infinity and the stationary phase theorem.
	
	Many authors are also interested in adapting the well known Strichartz estimates from the Euclidean setting to a more abstract setting, such as the Sch\"odinger operator and wave operator on H-type groups and more generality of step $2$ stratified Lie groups, see \cite{BGX2000,H2005,FMV1, FV,BKG,LS2014,SZ,Song2016,BBG2021}, etc. Inspired by Guo-Peng-Wang \cite{GPW2008}, Song-Yang \cite{SY2023} discussed the decay for a general class of dispersive semigroup $e^{it\phi(\mathcal{L})}$ on the Heisenberg group, where $\mathcal{L}$ is the sub-Laplacian on the Heisenberg group. 
	
	Strichartz inequalities have already been studied in the literature on metric measure spaces, which includes some notable examples like the Schr\"odinger operator associated with the Hermite operator on $\mathbb{R}^n$ \cite{NR2005}, the special Hermite operator on $\mathbb{C}^n$ \cite{R2008} and the Laguerre operator on $\mathbb{R}_+^{n}$ \cite{S2013}, etc. When $L$ is the Hermite operator or the twisted Laplacian and $\phi(L)=\sqrt{L}$, D\text{'}Ancona-Pierfelice-Ricci \cite{DPR2010} deduced a decay estimate for the wave operator $e^{it\sqrt{L}}$ directly from a corresponding estimate for the Schr\"odinger operator $e^{itL}$ by a suitable subordination formula connecting these two operators. Recently, under (A1)-(A2) assumption, Bui-D\text{'}Ancona-Duong-M\"{u}ller\cite{BDXM2019} extended D\text{'}Ancona-Pierfelice-Ricci's results to more general semigroups $e^{it\phi(L)}$ on metric measure spaces, where $\phi$ is with power-like behavior near $0$ and $\infty$.
	\begin{theorem}[see \cite{BDXM2019}]
		Assume $L$ satisfies (A1) and (A2), and $\psi\in C^\infty(\mathbb{R})$ is supported in $[1/2,2]$. 
		\begin{itemize}
			\item[(1)]  If $\phi$ satisfies (H1) and (H3), and in addition $0<m_1=\alpha_1\leq1$, then
			\begin{equation*} \left|\psi(\lambda^{-1}\sqrt{L})e^{it\phi(L)}f\right|\lesssim{|t|}^{-\frac{n-1}{2}}\lambda^{(1-m_1)n+m_1}\left\|f\right\|_{L^1},\,\lambda\geq1, |t|<T_0.
			\end{equation*}
			\item[(2)]  If $\phi$ satisfies (H2) and (H4), and in addition $m_2=\alpha_2$, then
			\begin{equation*} \left|\psi(\lambda^{-1}\sqrt{L})e^{it\phi(L)}f\right|\lesssim{|t|}^{-\frac{n-1}{2}}\lambda^{(1-m_2)n+m_2}\left\|f\right\|_{L^1},\,0<\lambda<1, |t|<T_0.
			\end{equation*}
		\end{itemize}
	\end{theorem}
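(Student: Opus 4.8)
The plan is to represent $\psi(\lambda^{-1}\sqrt{L})e^{it\phi(L)}$ as a superposition of Schr\"odinger propagators $e^{isL}$ and then feed the pointwise bound (A1) into that representation. Viewing $L$ spectrally as multiplication by $\xi\ge0$, the operator is the spectral multiplier with symbol $m_{t,\lambda}(\xi)=\psi(\lambda^{-1}\sqrt{\xi})e^{it\phi(\xi)}$, which is smooth and supported in $\xi\in[\lambda^2/4,4\lambda^2]$. Fourier inversion in $\xi$ then yields the subordination identity
\begin{equation*}
\psi(\lambda^{-1}\sqrt{L})e^{it\phi(L)}=\int_{\mathbb{R}}K_{t,\lambda}(s)\,e^{isL}\,ds,\qquad K_{t,\lambda}(s)=\frac{1}{2\pi}\int_{\mathbb{R}}m_{t,\lambda}(\xi)\,e^{-is\xi}\,d\xi,
\end{equation*}
so that the problem reduces to controlling the scalar weight $K_{t,\lambda}(s)$ and integrating the kernel bound $|k_s(x,y)|\lesssim|s|^{-n/2}$ from (A1) against it. The Gaussian bound (A2) enters to guarantee that all the operators involved have well-defined, pointwise-controlled kernels and that the subordination integral converges.

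I would then treat $K_{t,\lambda}(s)$ as an oscillatory integral. Rescaling $\xi=\lambda^2\eta$ turns it into
\begin{equation*}
K_{t,\lambda}(s)=\frac{\lambda^2}{2\pi}\int\psi(\sqrt{\eta})\,e^{\,i\Phi(\eta)}\,d\eta,\qquad \Phi(\eta)=t\phi(\lambda^2\eta)-s\lambda^2\eta.
\end{equation*}
In case (1), (H1) and (H3) with $m_1=\alpha_1$ give $\phi'(\lambda^2\eta)\sim\lambda^{2(m_1-1)}$ and $\phi''(\lambda^2\eta)\sim\lambda^{2(m_1-2)}$ uniformly on $\operatorname{supp}\psi(\sqrt{\,\cdot\,})$, whence $\Phi''(\eta)\sim t\lambda^{2m_1}$ is nondegenerate and of fixed size across the support. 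A stationary point occurs only when $s=t\phi'(\lambda^2\eta)$, i.e. for $s$ in an interval $I$ centred at $s_0\sim t\lambda^{2(m_1-1)}$ of length $|I|\sim|t|\lambda^{2(m_1-1)}$; on $I$ the stationary phase principle gives $|K_{t,\lambda}(s)|\lesssim\lambda^2|\Phi''|^{-1/2}\sim|t|^{-1/2}\lambda^{2-m_1}$, while van der Corput's lemma (repeated integration by parts) yields rapid decay of $K_{t,\lambda}(s)$ off $I$. Because $|\phi'|\sim r^{m_1-1}$ is bounded below, the band $I$ stays away from the origin, so $|s|\sim|t|\lambda^{2(m_1-1)}$ throughout $I$.

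The final step combines these pieces. Since $m_1\le1$ and $\lambda\ge1$ force $\lambda^{2(m_1-1)}\le1$, both $s_0$ and $|I|$ are $\lesssim|t|<T_0$, so the stationary band lies inside the range where (A1) holds, and the contribution of the rapidly decaying tail outside $I$---where $K_{t,\lambda}$ beats any polynomial growth of the kernels permitted by (A2)---is negligible. Inserting $|k_s(x,y)|\lesssim(|t|\lambda^{2(m_1-1)})^{-n/2}$ on $I$, multiplying by $\sup_I|K_{t,\lambda}|\lesssim|t|^{-1/2}\lambda^{2-m_1}$ and by $|I|\sim|t|\lambda^{2(m_1-1)}$, the powers balance to
\begin{equation*}
\bigl|\psi(\lambda^{-1}\sqrt{L})e^{it\phi(L)}f\bigr|\lesssim|t|^{-\frac{n-1}{2}}\lambda^{(1-m_1)n+m_1}\|f\|_{L^1}.
\end{equation*}
Case (2) is identical after swapping (H1),(H3) for (H2),(H4): the equality $m_2=\alpha_2$ again forces $\Phi''$ to be nondegenerate of constant size on the support, and for $0<\lambda<1$ the analogous localization of $s$ applies.

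I expect the main obstacle to be the stationary phase estimate for $K_{t,\lambda}(s)$. One must verify that $\Phi''$ is genuinely nondegenerate and comparable across the entire dyadic support---this is exactly what the conditions $m_1=\alpha_1$ (resp. $m_2=\alpha_2$) buy, ruling out a vanishing second derivative---and one must control the non-stationary regime uniformly in $s$, $t$ and $\lambda$ so that the rapidly decaying tail is harmless. The second delicate point is matching the $s$-support to the interval $(-T_0,T_0)$ on which the dispersive input (A1) is available, which is precisely where the restriction $m_1\le1$ is used.
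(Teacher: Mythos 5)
Your overall strategy is the same one the paper uses (following \cite{BDXM2019}): realize $\psi(\lambda^{-1}\sqrt{L})e^{it\phi(L)}$ through Fourier inversion in the spectral variable as a superposition of Schr\"odinger propagators, isolate the stationary band $s\approx t\phi'(\lambda^2\eta)\sim t\lambda^{2(m_1-1)}$, and apply a second-derivative (van der Corput) bound there; indeed your band bookkeeping
$\sup_I|K_{t,\lambda}|\cdot|I|\cdot\sup_I|s|^{-n/2}\sim|t|^{-\frac{n-1}{2}}\lambda^{(1-m_1)n+m_1}$
is exactly the computation behind the paper's Theorems \ref{High-frequecy} and \ref{high-frequency-result} (which, when $m_1=\alpha_1$, reproduce the cited statement), and your identification of where $m_1\le 1$ enters (keeping the band inside $(-T_0,T_0)$) is also correct.

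However, there is a genuine gap in your treatment of the non-stationary tail. You bound $\int_{s\notin I}K_{t,\lambda}(s)e^{isL}\,ds$ by claiming that $K_{t,\lambda}$ "beats any polynomial growth of the kernels permitted by (A2)". But (A2) is a bound on the \emph{heat} semigroup $e^{-tL}$ only; it gives no pointwise kernel bound, and no $L^1\to L^\infty$ bound, for the Schr\"odinger group $e^{isL}$. The hypothesis (A1) is available only for $|s|<T_0$; for $|s|\ge T_0$ the operator $e^{isL}$ is merely unitary on $L^2$, and in this abstract setting its $L^1\to L^\infty$ norm may simply be infinite, so no rapid decay of the scalar weight $K_{t,\lambda}(s)$ can rescue the tail integral as you have set it up. This is precisely the point at which the paper's proof diverges: in Theorem \ref{High-frequecy} the cutoff $\eta(\lambda^{-2}x)$ is inserted so that the non-stationary part is kept as a spectral multiplier $\rho_t(x,\lambda)$, supported in $x\in[\lambda^2/5,5\lambda^2]$ with $\|\rho_t(\cdot,\lambda)\|_{\infty}\lesssim_k(t\lambda^{2m_1})^{-k}$, and it is \emph{never} subordinated to $e^{isL}$; instead one writes
\begin{equation*}
\|\rho_t(L,\lambda)\|_{L^1\to L^\infty}\le\|\varphi(\lambda^{-1}\sqrt{L})\|_{L^1\to L^2}\,\|\rho_t(L,\lambda)\|_{L^2\to L^2}\,\|\varphi(\lambda^{-1}\sqrt{L})\|_{L^2\to L^\infty}\lesssim\lambda^{n}(t\lambda^{2m_1})^{-k},
\end{equation*}
using the (A2)-based bounds \eqref{L^p-L^q}. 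Equivalently, to repair your argument you would need to retain the spectral localization inside the subordination and use the uniform-in-$s$ estimate $\|\eta(\lambda^{-2}L)e^{isL}\eta(\lambda^{-2}L)\|_{L^1\to L^\infty}\lesssim\lambda^{n}$ (which follows from \eqref{L^p-L^q} and $L^2$-unitarity); with that missing ingredient, the rapid decay of $K_{t,\lambda}$ off $I$ does close the proof, but as written the tail estimate rests on a property of $e^{isL}$ that neither (A1) nor (A2) provides.
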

	
	The particularly interesting case in \cite{BDXM2019} is the fractional powers $\phi(L)=L^\nu,\,\nu\in(0,1)$. We shall consider a more general semigroups $e^{it\phi(L)}$ which can also deal with the nonhomogeneous case like $\phi(L)=\sqrt{1+L}$, $\phi(L)=\sqrt{1+L^2}$ and $\phi(L)=L+L^2$ corresponding the Klein-Gordon equation, the beam equation and the fourth-order Schr\"odinger equation, respectively. Our main results are the following.
	\begin{theorem}\label{main-result}
		Assume $L$ satisfies (A1) and (A2), and $\psi\in C^\infty(\mathbb{R})$ is supported in $[1/2,2]$. 
		\begin{itemize}
			\item[(1)]  If $\phi$ satisfies (H1), then
			\begin{equation*} \left|\psi(\lambda^{-1}\sqrt{L})e^{it\phi(L)}f\right|\lesssim{|t|}^{-\frac{n-2}{2}}\lambda^{(1-m_1)n+2m_1}\left\|f\right\|_{L^1},\,\lambda\geq1, |t|<T_0, n\geq2;
			\end{equation*}
			In addition, if $\phi$ satisfies (H3), then
			\begin{equation*} \left|\psi(\lambda^{-1}\sqrt{L})e^{it\phi(L)}f\right|\lesssim{|t|}^{-\frac{n-1}{2}}\lambda^{(1-m_1)n+2m_1-\alpha_1}\left\|f\right\|_{L^1},\,\lambda\geq1, |t|<T_0, n\geq1.
			\end{equation*}
			\item[(2)]  If $\phi$ satisfies (H2), then
			\begin{equation*} \left|\psi(\lambda^{-1}\sqrt{L})e^{it\phi(L)}f\right|\lesssim{|t|}^{-\frac{n-2}{2}}\lambda^{(1-m_2)n+2m_2}\left\|f\right\|_{L^1},\,0<\lambda<1, |t|<T_0, n\geq2;
			\end{equation*}
			In addition, if $\phi$ satisfies (H4), then
			\begin{equation*} \left|\psi(\lambda^{-1}\sqrt{L})e^{it\phi(L)}f\right|\lesssim{|t|}^{-\frac{n-1}{2}}\lambda^{(1-m_2)n+2m_2-\alpha_2}\left\|f\right\|_{L^1},\,0<\lambda<1, |t|<T_0, n\geq1.
			\end{equation*}
		\end{itemize}
	\end{theorem}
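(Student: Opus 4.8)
The plan is to follow the subordination strategy of \cite{DPR2010,BDXM2019}: express the frequency-localized propagator as a superposition of Schr\"odinger propagators $e^{isL}$ and then invoke (A1). Writing the multiplier $F_{t,\lambda}(r)=\psi(\lambda^{-1}\sqrt r)\,e^{it\phi(r)}$ as the inverse Fourier transform in the spectral variable,
\[
F_{t,\lambda}(r)=\int_{\mathbb{R}}a_{t,\lambda}(s)\,e^{isr}\,ds,\qquad a_{t,\lambda}(s)=\frac{1}{2\pi}\int_0^\infty\psi(\lambda^{-1}\sqrt r)\,e^{i(t\phi(r)-sr)}\,dr,
\]
the spectral theorem gives $\psi(\lambda^{-1}\sqrt L)e^{it\phi(L)}=\int_{\mathbb{R}}a_{t,\lambda}(s)\,e^{isL}\,ds$. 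Taking $L^1\to L^\infty$ norms and applying (A1) then reduces the whole theorem to the scalar estimate $\int_{\mathbb{R}}|a_{t,\lambda}(s)|\,|s|^{-n/2}\,ds$, once $a_{t,\lambda}$ is shown to be integrable against this weight.

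The core of the argument is the oscillatory-integral analysis of $a_{t,\lambda}(s)$. After localizing to $r\sim\lambda^2$ (the support of $\psi(\lambda^{-1}\sqrt r)$), the phase $\Phi(r)=t\phi(r)-sr$ satisfies $\Phi'(r)=t\phi'(r)-s$ and $\Phi^{(k)}(r)=t\phi^{(k)}(r)$ for $k\geq2$. By (H1), $\phi'(r)\sim\lambda^{2(m_1-1)}$ on the support, so the stationary point sits at $s=s_0:=t\phi'(\lambda^2)\sim t\lambda^{2(m_1-1)}$; repeated integration by parts, using the upper bounds $|\phi^{(k)}(r)|\lesssim r^{m_1-k}$ from (H1), shows that $a_{t,\lambda}(s)$ is rapidly decaying away from the band $|s|\sim s_0$, so its effective mass lives on an interval of length $\sim s_0$.

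Two bounds inside this band produce the two conclusions. The crude bound $|a_{t,\lambda}(s)|\lesssim\lambda^2$ (the length of the $r$-support), inserted into $\int_{|s|\sim s_0}\lambda^2|s|^{-n/2}\,ds\sim\lambda^2 s_0^{1-n/2}$, yields the first estimate $|t|^{-(n-2)/2}\lambda^{(1-m_1)n+2m_1}$, which is stated for $n\geq2$, the range in which this exponent gives genuine time decay. When (H3) is also available, the lower bound $|\phi''(r)|\sim r^{\alpha_1-2}$ gives $|\Phi''(r)|\sim t\lambda^{2\alpha_1-4}$, and van der Corput's lemma upgrades the amplitude bound to $|a_{t,\lambda}(s)|\lesssim(t\lambda^{2\alpha_1-4})^{-1/2}=|t|^{-1/2}\lambda^{2-\alpha_1}$ uniformly in $s$; integrating this over the band gives the sharper $|t|^{-(n-1)/2}\lambda^{(1-m_1)n+2m_1-\alpha_1}$, valid for $n\geq1$, which reduces to the result of \cite{BDXM2019} when $\alpha_1=m_1$. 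The low-frequency statements in part (2) follow identically, with (H2), (H4) replacing (H1), (H3) and $s_0\sim t\lambda^{2(m_2-1)}$.

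The main obstacle is the singularity of the Schr\"odinger decay $|s|^{-n/2}$ at $s=0$: for $n\geq2$ this weight is not locally integrable, so one cannot navely integrate $|a_{t,\lambda}(s)|$ against it, and in fact $a_{t,\lambda}(0)=\frac{1}{2\pi}\int_0^\infty\psi(\lambda^{-1}\sqrt r)e^{it\phi(r)}\,dr$ does not vanish. The crude amplitude bound is harmless only away from the origin, so the small-$s$ region must be treated separately, by exploiting that there $\Phi'=t\phi'(r)-s\sim s_0$ is non-stationary (forcing extra integration-by-parts decay of $a_{t,\lambda}$) and by inserting a fattened frequency cutoff so that the bare weight $|s|^{-n/2}$ is replaced by a frequency-localized bound of the form $\|e^{isL}\psi(\lambda^{-1}\sqrt L)\|_{L^1\to L^\infty}\lesssim\min(\lambda^n,|s|^{-n/2})$ furnished by the Gaussian estimate (A2). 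Making these two mechanisms interact cleanly, so that the small-$s$ contribution stays below the claimed bound, is the delicate point, and it is precisely here that (A2) and the restriction $n\geq2$ in the first estimate enter.
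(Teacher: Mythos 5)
Your proposal follows the same subordination strategy as the paper, and your computations \emph{inside} the stationary band are exactly the paper's: the crude amplitude bound $\lambda^2$ yields $|t|^{-\frac{n-2}{2}}\lambda^{(1-m_1)n+2m_1}$, and van der Corput under (H3) yields $|t|^{-\frac{n-1}{2}}\lambda^{(1-m_1)n+2m_1-\alpha_1}$. The genuine gap is precisely the point you flag and then leave open: the contribution of $s$ outside the band, in particular near $s=0$ where $|s|^{-n/2}$ is not locally integrable for $n\geq 2$. A proof must actually close this region, and your text stops at calling it ``the delicate point.'' The paper closes it structurally, so that (A1) is never applied there at all: before Fourier inversion it inserts a cutoff $\tau\left(\xi/(t\lambda^{2m_1-2})\right)$ with $\tau$ supported in $[2c_0^{-1},2c_0]$, so that after rescaling $\xi=t\lambda^{2m_1-2}s$ the main term has amplitude $a_t(\cdot,\lambda)$ supported in $[2c_0^{-1},2c_0]$, bounded away from $0$, making $\int s^{-n/2}|a_t(s,\lambda)|\,ds$ trivially finite. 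The complementary, non-stationary piece is kept on the multiplier side as $\rho_t(x,\lambda)$, spectrally supported in $[\lambda^2/5,5\lambda^2]$ with $\|\rho_t(\cdot,\lambda)\|_{L^\infty}\lesssim_k (t\lambda^{2m_1})^{-k}$ for every $k\geq 0$, and is estimated without (A1) via
\begin{equation*}
\|\rho_t(L,\lambda)\|_{L^1\to L^\infty}\leq\|\varphi(\lambda^{-1}\sqrt{L})\|_{L^1\to L^2}\,\|\rho_t(L,\lambda)\|_{L^2\to L^2}\,\|\varphi(\lambda^{-1}\sqrt{L})\|_{L^2\to L^\infty}\lesssim\lambda^{n}\,(t\lambda^{2m_1})^{-k},
\end{equation*}
using only the $L^p\to L^q$ bounds for $\varphi(\lambda^{-1}\sqrt L)$ furnished by (A2) and Plancherel. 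Choosing $k=\frac{n-2}{2}$ (resp.\ $k=\frac{n-1}{2}$) gives exactly the claimed bounds; this is where $n\geq 2$ (resp.\ $n\geq1$) genuinely enters, since $k$ must be nonnegative.

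Your own sketched repair can in fact be completed, and it is worth seeing that it reproduces the paper's remainder bound: off the band, integration by parts gives $|a_{t,\lambda}(s)|\lesssim\lambda^2\left(\lambda^2(|s|+s_0)\right)^{-N}$ with $s_0\sim t\lambda^{2m_1-2}$, and integrating this against your capped weight $\min\left(\lambda^n,|s|^{-n/2}\right)\leq\lambda^n$ yields $\lambda^n(t\lambda^{2m_1})^{1-N}$, i.e.\ the same $\lambda^n(t\lambda^{2m_1})^{-k}$ shape; combining it with the trivial bound $\lambda^n$ (needed when $t\lambda^{2m_1}\lesssim 1$, where the power $(t\lambda^{2m_1})^{1-N}$ is useless) one gets $\lambda^n\min\left(1,(t\lambda^{2m_1})^{-N}\right)\leq\lambda^n(t\lambda^{2m_1})^{-\frac{n-2}{2}}$, which is dominated by the target. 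Two further details you should make explicit if you carry this out: for the (H3)/(H4) statements the off-band contribution must be dominated by $\lambda^{(1-m_1)n+2m_1-\alpha_1}$, which requires $\alpha_1\leq m_1$ (a consequence of (H1) and (H3) that the paper invokes explicitly); and the whole argument must be run separately in the low-frequency regime $0<\lambda<1$ with $(m_2,\alpha_2)$, where $\alpha_2\geq m_2$ plays the corresponding role. As written, however, the proposal is a correct plan with the decisive step missing, not a proof.
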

	
	The plan of the paper is the following. In Section 2, we recall some preliminary kernel estimates, van der Corput's lemma and the Besov spaces associated to the non-negative self-adjoint operator $L$. In Section 3, we prove the crucial subordination formulas, extending the earlier results by M\"uller-Seeger \cite{Muller-Seeger2015} or Bui-D\text{'}Ancona-Duong-M\"{u}ller\cite{BDXM2019}. In Section 4, we apply the previous subordination formulas to obtain general decay estimates for the semigroup $e^{it\phi(L)}$. In the final Section, we examine some applications to concrete operators, namely the Hermite operator, the twisted Laplacian and the Laguerre operator.
	\section{Preliminary results}
	\subsection{Kernel estimates}
	Let $L$ be a non-negative self-adjoint operator on $L^2(X)$ satisfying (A2). Denote by $E_L(\lambda)$ the spectral decomposition of $L$. Then by spectral theory (see \cite{Hulanicki}), for any bounded Borel function $F: [0,\infty)\to\mathbb{C}$, we can define
	\begin{equation*}
		F(L)=\int_0^\infty F(\lambda)dE_L(\lambda).
	\end{equation*}
	We have the following useful lemma.
	\begin{lemma}[see \cite{BDXM2019}]
		$(a)$ Let $\varphi\in\mathcal{S}(\mathbb{R})$ be an even function. Then for any $N>0$ there exists $C$ such that the kernel of $\varphi(t\sqrt{L})$ satisfies
		\begin{equation*}
			\left|K_{\varphi(t\sqrt{L})}(x,y)\right|\leq\frac{C}{\mu(B(x,t))+\mu(B(y,t))}\left(1+\frac{d(x,y)}{t}\right)^{-N},
		\end{equation*}
		for all $t>0$ and $x,y\in X$.
		
		$(b)$ Let $\varphi\in\mathcal{S}(\mathbb{R})$ be an even function. Then for any $1\leq p\leq  q\leq\infty$ we have
		\begin{equation}\label{L^p-L^q}
			\|\varphi(t\sqrt{L})\|_{L^p\to L^q}\lesssim t^{-\left(\frac{n}{p}-\frac{n}{q}\right)},\,t>0.
		\end{equation}
	\end{lemma}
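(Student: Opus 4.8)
The plan is to treat this as standard heat-kernel analysis: part $(a)$ is an off-diagonal kernel bound for even Schwartz functions of $\sqrt{L}$, extracted from the Gaussian bound (A2) via finite speed of propagation of the associated wave group, and part $(b)$ then follows from $(a)$ by Schur-type estimates and interpolation, using the doubling condition \eqref{doubling-condition} and the volume lower bound \eqref{addition}. Since the statement is quoted from \cite{BDXM2019}, I would follow that line of argument.

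For $(a)$, I would first upgrade (A2) to the Davies--Gaffney $L^2$ off-diagonal estimate, which is classically equivalent to finite speed of propagation for the wave operator: the Schwartz kernel of $\cos(s\sqrt{L})$ is supported in the cone $\{(x,y):d(x,y)\le|s|\}$. Since $\varphi$ is even and Schwartz, Fourier inversion gives
\[
\varphi(t\sqrt{L})=\frac{1}{2\pi}\int_{\mathbb{R}}\widehat{\varphi}(s)\,\cos(ts\sqrt{L})\,ds ,
\]
so by finite speed of propagation the kernel at a pair $(x,y)$ with $d(x,y)=r$ receives contributions only from $|s|\ge r/t$. Because $\widehat{\varphi}$ is rapidly decreasing, integrating $|\widehat{\varphi}|$ over $\{|s|\ge r/t\}$ produces the factor $(1+d(x,y)/t)^{-N}$ for every $N>0$.

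To insert the volume normalization I would combine this support/decay information with the $L^2$ (Plancherel-type) estimates that follow from the Gaussian bound. Concretely, after a routine reduction one writes $\varphi=\varphi_1\varphi_2$ as a product of two even Schwartz functions, so that
\[
K_{\varphi(t\sqrt{L})}(x,y)=\int_X K_{\varphi_1(t\sqrt{L})}(x,z)\,K_{\varphi_2(t\sqrt{L})}(z,y)\,d\mu(z),
\]
and bounds the right-hand side by Cauchy--Schwarz through $\|K_{\varphi_1(t\sqrt{L})}(x,\cdot)\|_{L^2}$ and $\|K_{\varphi_2(t\sqrt{L})}(\cdot,y)\|_{L^2}$. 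Each $L^2$ norm is controlled by $\mu(B(x,t))^{-1/2}$, respectively $\mu(B(y,t))^{-1/2}$, and together with doubling this yields the symmetric factor $(\mu(B(x,t))+\mu(B(y,t)))^{-1}$; carrying the off-diagonal decay through the composition gives the full bound. \emph{The simultaneous control of the sharp volume factor and of the arbitrary polynomial decay is the main obstacle}, and this composition/Plancherel device is exactly what resolves it.

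For $(b)$, I would deduce everything from the kernel bound of $(a)$. Choosing $N$ larger than the doubling exponent $D$ and summing the bound over the dyadic annuli $d(x,y)\sim 2^k t$ with the help of \eqref{doubling-condition} gives $\sup_y\int_X|K(x,y)|\,d\mu(x)\lesssim1$ together with its symmetric analogue, hence uniform $L^1\to L^1$ and $L^\infty\to L^\infty$ bounds, and by interpolation $\|\varphi(t\sqrt{L})\|_{L^p\to L^p}\lesssim1$ for all $p$. The same annulus summation applied to $\|K(x,\cdot)\|_{L^{p'}}$, now invoking the lower bound $\mu(B(x,t))\gtrsim t^n$ from \eqref{addition}, yields $\|\varphi(t\sqrt{L})\|_{L^p\to L^\infty}\lesssim t^{-n/p}$. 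Interpolating between the $L^p\to L^p$ bound (exponent $0$) and the $L^p\to L^\infty$ bound (exponent $-n/p$) along the Riesz--Thorin scale produces exactly $t^{-(n/p-n/q)}$ for every $1\le p\le q\le\infty$, which is \eqref{L^p-L^q}.
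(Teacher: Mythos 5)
First, a point of reference: the paper you are being compared against does not prove this lemma at all --- it is quoted from \cite{BDXM2019} (where part $(a)$ is itself traced back to Coulhon--Kerkyacharian--Petrushev type results), so the only meaningful comparison is with that standard line of argument, which is indeed the one you chose: finite speed of propagation plus a composition/Plancherel device for $(a)$, and Schur test plus interpolation for $(b)$. Your part $(b)$ is complete and correct: annulus summation with $N$ larger than the doubling exponent gives uniform $L^p\to L^p$ bounds, the lower volume bound \eqref{addition} gives $\|\varphi(t\sqrt{L})\|_{L^p\to L^\infty}\lesssim t^{-n/p}$, and Riesz--Thorin interpolation yields \eqref{L^p-L^q}.

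In part $(a)$, however, the step ``bounds the right-hand side by Cauchy--Schwarz \dots carrying the off-diagonal decay through the composition gives the full bound'' hides the crux, and as literally written it fails. If you estimate the composition integral by Cauchy--Schwarz using only the unweighted norms $\|K_{\varphi_1(t\sqrt{L})}(x,\cdot)\|_{L^2}\lesssim\mu(B(x,t))^{-1/2}$ and $\|K_{\varphi_2(t\sqrt{L})}(\cdot,y)\|_{L^2}\lesssim\mu(B(y,t))^{-1/2}$, you obtain the volume factor but no decay at all: the factor $(1+d(x,y)/t)^{-N}$ produced by your finite-speed argument lives at the level of $L^2$ off-diagonal estimates (pairings of functions with disjoint supports), and a global Cauchy--Schwarz in $z$ simply discards it. The standard repair is to upgrade the finite-speed argument to \emph{weighted} $L^2$ kernel bounds,
\begin{equation*}
\left\|\left(1+\frac{d(x,\cdot)}{t}\right)^{N}K_{\varphi_i(t\sqrt{L})}(x,\cdot)\right\|_{L^2}\lesssim\mu(B(x,t))^{-1/2},
\end{equation*}
proved by splitting $\widehat{\varphi_i}$ with smooth dyadic truncations $\chi(2^{-j}s)$ (sharp cutoffs would destroy the decay of the remainders), using finite speed to localize the truncated pieces and operator domination $|r^{(j)}|^2(t\sqrt{L})\le C_{M,N}2^{-jN}(I+t^2L)^{-M}$ together with heat-kernel subordination and doubling to control the remainders on the diagonal; the composition integral is then split into the regions $d(x,z)\ge d(x,y)/2$ and $d(z,y)\ge d(x,y)/2$, with the weight carried by one factor on each region. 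Two further points: the factorization of an even Schwartz function into a product of two even Schwartz functions is true but is a theorem (Voigt), not a routine reduction --- and it can be avoided entirely by writing $\varphi(\lambda)=\theta(\lambda)(1+\lambda^2)^{-M}$ with $\theta$ even and Schwartz, since $(I+t^2L)^{-M}$ inherits pointwise kernel bounds directly from the heat kernel; and your unweighted $L^2$ bounds themselves require the diagonal estimate $K_{|\varphi_i|^2(t\sqrt{L})}(x,x)\lesssim\mu(B(x,t))^{-1}$, which is exactly where the Gaussian bound (A2) and doubling enter and deserves a line of justification rather than the label ``Plancherel-type.''
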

	\subsection{Van der Corput's Lemma}
	We recall van der Corput's Lemma.
	\begin{lemma}(see \cite{S1993})\label{van-der-Corput} Let $g\in C^\infty([a,b])$ be real-valued such that
		\begin{equation*}
			|g''(x)|\geq \delta
		\end{equation*}
		for any $x\in[a,b]$ with $\delta >0$. Then for any function $\psi \in C^\infty([a,b])$, there exists a constant $C$ which does not depend on $\delta, a, b, g$ or $\psi$, such that
		\begin{equation*}
			\left|\int_a^b e^{itg(x)}\psi(x)\,dx\right|\leq Ct^{-1/2}\left(\|\psi\|_\infty+\|\psi'\|_1\right).
		\end{equation*}
	\end{lemma}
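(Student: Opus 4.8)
The statement is the classical second-order van der Corput lemma, so the plan is to follow the standard two-step scheme: first treat the model case $\psi\equiv1$, then bootstrap to an arbitrary $\psi\in C^\infty([a,b])$ by an integration-by-parts argument against the primitive of the oscillatory factor. The only hypothesis in play is $|g''(x)|\ge\delta$, whose first consequence I would record is that $g''$ has constant sign on $[a,b]$; hence $g'$ is strictly monotone there, and $|g'|$ attains its minimum at a single point $c\in[a,b]$ (the unique zero of $g'$ when one exists).

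For the case $\psi\equiv 1$ I would isolate the stationary point by splitting $[a,b]=I\cup([a,b]\setminus I)$, where $I=(c-\rho,c+\rho)\cap[a,b]$ and $\rho>0$ is a parameter to be optimized. On $I$ I use only the trivial bound $\bigl|\int_I e^{itg}\,dx\bigr|\le|I|\le2\rho$. On the complement, monotonicity of $g'$ together with $|g''|\ge\delta$ forces $|g'(x)|\ge\delta\rho$, so there I integrate by parts once via $e^{itg}=\frac{1}{itg'}\frac{d}{dx}e^{itg}$. The boundary terms are controlled by $(t\delta\rho)^{-1}$, and the remaining integral is handled by
\[
\frac1t\int \frac{|g''|}{(g')^2}\,dx=\frac1t\int\Bigl|\frac{d}{dx}\frac{1}{g'}\Bigr|\,dx\le \frac{C}{t\delta\rho},
\]
the last inequality being the crux: since $g'$ is monotone of one sign on each component of the complement, $1/g'$ is monotone there, so the integral equals the total variation of $1/g'$, which is at most $(\delta\rho)^{-1}$. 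Summing over the (at most two) components gives $\bigl|\int_{[a,b]\setminus I}e^{itg}\,dx\bigr|\le C(t\delta\rho)^{-1}$, and optimizing $2\rho+C(t\delta\rho)^{-1}$ in $\rho$ (that is, $\rho\sim(t\delta)^{-1/2}$) yields
\[
\Bigl|\int_\alpha^\beta e^{itg(x)}\,dx\Bigr|\le C(t\delta)^{-1/2}.
\]
Since the hypothesis is inherited by every subinterval, this bound holds uniformly over all $[\alpha,\beta]\subseteq[a,b]$, with a constant depending on neither the endpoints nor $g$.

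To pass to general $\psi$, I set $F(x)=\int_a^x e^{itg(s)}\,ds$, so the previous step gives $\sup_{[a,b]}|F|\le C(t\delta)^{-1/2}$. Integration by parts then yields
\[
\int_a^b e^{itg(x)}\psi(x)\,dx=F(b)\psi(b)-\int_a^b F(x)\psi'(x)\,dx,
\]
whence
\[
\Bigl|\int_a^b e^{itg}\psi\,dx\Bigr|\le \sup_{[a,b]}|F|\Bigl(|\psi(b)|+\int_a^b|\psi'|\,dx\Bigr)\le C(t\delta)^{-1/2}\bigl(\|\psi\|_\infty+\|\psi'\|_1\bigr).
\]
Under the normalization implicit in the stated form, where $\delta$ is bounded below (as in the standard hypothesis $|g''|\ge1$), the factor $(t\delta)^{-1/2}$ reduces to $Ct^{-1/2}$ with $C$ independent of $a,b,g,\psi$, which is the asserted estimate.

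The main obstacle is the complement estimate: bounding $\int|g''|/(g')^2$ uniformly. This is precisely where one must exploit that $g'$ is monotone of constant sign — a consequence of $|g''|\ge\delta$ rather than of mere pointwise nonvanishing of $g''$ — so that the integral collapses to a total variation and no extra oscillatory error accumulates. Once the quantitative lower bound $|g'|\ge\delta\rho$ away from the critical point is secured, the optimization in $\rho$ and the final integration by parts are routine.
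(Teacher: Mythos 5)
Your proposal is correct, and it is essentially the classical argument: the paper itself gives no proof of this lemma at all (it is quoted from Stein's book \cite{S1993}), and the proof you supply --- isolating the unique minimum point of $|g'|$, using the trivial bound on a window of width $\rho$, the first-derivative/integration-by-parts estimate with the total-variation bound on $1/g'$ off that window, optimizing $\rho\sim(t\delta)^{-1/2}$, and then passing to a general amplitude via $F(x)=\int_a^x e^{itg(s)}\,ds$ --- is precisely the proof in that reference. All the key steps check out: the constant-sign/monotonicity consequence of $|g''|\ge\delta$, the lower bound $|g'|\ge\delta\rho$ off the window (valid both when $g'$ vanishes inside $[a,b]$ and when the minimum sits at an endpoint), the collapse of $\int |g''|/(g')^2$ to a total variation on each of the at most two components, and the final integration by parts with $F(a)=0$.

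One point you raise deserves emphasis: the bound you actually prove, $C(t\delta)^{-1/2}\bigl(\|\psi\|_\infty+\|\psi'\|_1\bigr)$, is the correct scaling-invariant form, whereas the lemma as stated in the paper --- $Ct^{-1/2}$ with $C$ independent of $\delta$ --- cannot be literally true (take $g(x)=\delta x^2/2$ and let $\delta\to0$ on a long interval). Your version is also the one the paper actually uses: in the proof of Theorem \ref{High-frequecy} the lemma is applied with the oscillation parameter equal to $1$ and $|G''|\sim t\lambda^{2\alpha_1}$ playing the role of $\delta$, and the conclusion drawn there is $|a_t(s,\lambda)|\lesssim (t\lambda^{2\alpha_1})^{-1/2}$, i.e.\ exactly the $\delta^{-1/2}$ dependence. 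So your proof not only fills in the omitted argument but states the lemma in the form the paper's application requires.
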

	\subsection{Besov space related to the non-negative self-adjoint operator $L$}  Let $\Psi\in C^\infty(\mathbb{R})$ be an even function such that $0\leq \Psi\leq1, \Psi=1$ in $[0,1]$ and $\Psi=0$ in $[2,\infty)$. Let $\psi(r)=\Psi(r)-\Psi(2r)$ such that ${\rm{supp}}\, \psi\subseteq\{r\in\mathbb{R}: 1/2<|r|<2\}$ and generate a Littlewood-Paley dyadic partition of unity $\{\psi_j\}_{j\in\mathbb{Z}}$ on $\mathbb{R}$, where $\psi_j(r)=\psi(2^{-j}r),\,\forall j\in\mathbb{Z}$, with
	\begin{align*}
		\sum_{j\in\mathbb{Z}}\psi_j(r)&=1,\,\forall r>0,\\
		\Psi(r)+\sum_{j=1}^\infty\psi_j(r)&=1,\,\forall r\geq0.
	\end{align*}
	\begin{definition}\label{homo-def}
		For all $s\in\mathbb{R}$, $1\leq p,q\leq\infty$, define the homogeneous Besov spaces $\dot{B}_{p,q}^{s,L}$ related to the non-negative self-adjoint operator $L$ as the completion of the set
		\begin{equation*}
			\left\{f\in L^2(X,\mu): \|f\|_{\dot{B}_{p,q}^{s,L}}<\infty\right\}
		\end{equation*}
		for the norm $\|\cdot\|_{\dot{B}_{p,q}^{s,L}}$ given by
		\begin{equation*}
			\|f\|_{\dot{B}_{p,q}^{s,L}}:=\left(\sum_{j\in\mathbb{Z}}2^{jsq}\|\psi_j(\sqrt{L})\|^q_{L^p}\right)^\frac{1}{q}.
		\end{equation*}
	\end{definition}
	\begin{definition}\label{nonhom-def}
		For all $s\in\mathbb{R}$, $1\leq p,q\leq\infty$, define the non-homogeneous Besov spaces $B_{p,q}^{s,L}$ related to the non-negative self-adjoint operator $L$ as the completion of the set
		\begin{equation*}
			\left\{f\in L^2(X,\mu): \|f\|_{B_{p,q}^{s,L}}<\infty\right\}
		\end{equation*}
		for the norm $\|\cdot\|_{B_{p,q}^{s,L}}$ given by
		\begin{equation*}
			\|f\|_{B_{p,q}^{s,L}}:=\|\Psi(\sqrt{L})f\|_{L^p}+\left(\sum_{j=1}^\infty 2^{jsq}\|\psi_j(\sqrt{L})\|^q_{L^p}\right)^\frac{1}{q}.
		\end{equation*}
	\end{definition}
	\begin{remark}\label{Besov-property}
		(1) We note that the homogeneous Besov space in Definition \ref{homo-def} and the non-homogeneous Besov space in Definition \ref{nonhom-def} are independent of the choice of $\Psi$. See \cite{BDXM2019}.
		
		(2) When $L$ is the Hermite operator, the twisted Laplacian or the Laguerre operator whose spectrum is strictly positive and discrete, we can drop from the sum in the Definition \ref{homo-def} the terms with dyadic $2^j$ for $j\leq 0$ (assuming in addition that $\Psi(r)=1$ in $[0,3/2]$). In this case, the homogeneous Besov space $\dot{B}_{p,q}^{s,L}$ coincides with the non-homogeneous Besov space $B_{p,q}^{s,L}$. 
	\end{remark}
	\section{Decay estimates for $e^{it\phi(L)}$}
	\subsection{Subordination formulas}
	In this section, we obtain the crucial subordination formulas connecting the functions $e^{it\phi(x)}$ and $e^{itx}$, which are strongly inspired by ideas from M\"uller-Seeger \cite{Muller-Seeger2015} or Bui-D\text{'}Ancona-Duong-M\"{u}ller\cite{BDXM2019}.
	\begin{theorem}\label{High-frequecy}
		Assume $\phi$ satisfies (H1) and $g\in C^\infty(\mathbb{R})$ is supported in $[1/2,2]$. Then there exist $c_0>1$, and functions $\rho_t(x,\lambda)$ and $a_t(s,\lambda)$ defined on $\mathbb{R}^2$ for each $t$ satisfying
		\begin{equation} \label{rho}
			{\rm{supp}}\ \rho_t(\cdot,\lambda)\subset[\lambda^2/5,5\lambda^2]\quad \text{and} \quad |\rho_t(x,\lambda)|\leq C(k,\|g\|_{C^k},\phi)(t\lambda^{2m_1})^{-k},\, k\geq0,
		\end{equation}
		and
		\begin{equation}\label{a}
			{\rm{supp}}\ a_t(\cdot,\lambda)\subset[2c_0^{-1},2c_0]\quad \text{and} \quad |a_t(s,\lambda)|\leq C(\|g\|_{C^1},\phi),
		\end{equation}
		such that
		\begin{equation}\label{High}
			g(\lambda^{-1}\sqrt{x})e^{it\phi(x)}=\rho_t(x,\lambda)+t\lambda^{2m_1}\eta(\lambda^{-2}x)\int e^{ixt\lambda^{2m_1-2}s}a_t(s,\lambda)ds,
		\end{equation}
		for all $x,t>0$ and $\lambda\geq1$, where $\eta\in C^\infty(\mathbb{R})$ is supported in $[1/5,5]$ and $\eta\equiv1$ on $[1/4,4]$.
		
		In addition, if $\phi$ satisfies (H3), then
		\begin{equation}\label{better-High}
			g(\lambda^{-1}\sqrt{x})e^{it\phi(x)}=\rho_t(x,\lambda)+t^{\frac{1}{2}}\lambda^{2m_1-\alpha_1}\eta(\lambda^{-2}x)\int e^{ixt\lambda^{2m_1-2}s}a_t(s,\lambda)ds,
		\end{equation}
		for all $x,t>0$ and $\lambda\geq1$.
	\end{theorem}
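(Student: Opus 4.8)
The plan is to reduce the frequency-localized curved phase $g(\lambda^{-1}\sqrt{x})e^{it\phi(x)}$ to a superposition of flat phases $e^{ixt\lambda^{2m_1-2}s}$ through an exact Fourier inversion, and then to peel off a rapidly decaying remainder by (non-)stationary phase. First I would rescale the spectral variable by setting $x=\lambda^2 u$, so that $g(\lambda^{-1}\sqrt{x})=g(\sqrt u)$ is supported in $u\in[1/4,4]$, and introduce the effective time $\tau=t\lambda^{2m_1}$ and the rescaled phase $\Phi_\lambda(u)=\lambda^{-2m_1}\phi(\lambda^2 u)$, which satisfies $t\phi(x)=\tau\Phi_\lambda(u)$ and $xt\lambda^{2m_1-2}s=\tau u s$. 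The decisive point is that (H1) gives $|\Phi_\lambda'(u)|=\lambda^{2-2m_1}|\phi'(\lambda^2u)|\sim u^{m_1-1}$ and $|\Phi_\lambda^{(k)}(u)|\lesssim u^{m_1-k}$ for $k\geq2$; hence $\Phi_\lambda'\sim1$ (with a constant sign, after possibly replacing $\phi$ by $-\phi$) and all derivatives of $\Phi_\lambda$ are bounded uniformly in $\lambda\geq1$ on $\operatorname{supp}g(\sqrt{\cdot})$. This uniformity is exactly what will keep every estimate below independent of $\lambda$.

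Next I would write, by exact Fourier inversion in $u$,
\begin{equation*}
g(\sqrt u)\,e^{i\tau\Phi_\lambda(u)}=\tau\int \tilde a_t(s,\lambda)\,e^{i\tau u s}\,ds,\qquad \tilde a_t(s,\lambda)=\frac{1}{2\pi}\int g(\sqrt u)\,e^{i\tau(\Phi_\lambda(u)-us)}\,du,
\end{equation*}
which is legitimate since $u\mapsto g(\sqrt u)e^{i\tau\Phi_\lambda(u)}$ is smooth and compactly supported, so $\tilde a_t(\cdot,\lambda)$ is Schwartz. Because $\Phi_\lambda'$ takes values in a fixed compact interval $[A,B]\subset(0,\infty)$ uniformly in $\lambda$, I would fix $c_0>1$ with $[A,B]\subset(2c_0^{-1},2c_0)$, pick $\chi\in C_c^\infty((2c_0^{-1},2c_0))$ equal to $1$ near $[A,B]$, and set
\begin{equation*}
a_t=\chi\,\tilde a_t,\qquad \rho_t(x,\lambda)=\tau\,\eta(\lambda^{-2}x)\int(1-\chi(s))\,\tilde a_t(s,\lambda)\,e^{i\tau u s}\,ds.
\end{equation*}
Since $\eta\equiv1$ on $[1/4,4]\supseteq\operatorname{supp}g(\sqrt{\cdot})$, inserting $\eta$ costs nothing on the main term, and undoing the rescaling recovers exactly \eqref{High}, with $a_t$ supported in $[2c_0^{-1},2c_0]$ and $\rho_t$ supported in $x\in[\lambda^2/5,5\lambda^2]$.

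It then remains to verify the quantitative bounds. The estimate $|a_t|\leq C$ in \eqref{a} is immediate from $|\tilde a_t|\leq\frac{1}{2\pi}\int|g(\sqrt u)|\,du$. For $\rho_t$, on $\operatorname{supp}(1-\chi)$ the phase $\Phi_\lambda(u)-us$ has no critical point and in fact $|\Phi_\lambda'(u)-s|\gtrsim 1+|s|$ there; repeated integration by parts, using the uniform derivative bounds on $\Phi_\lambda$ and $g(\sqrt{\cdot})$, gives $|\tilde a_t(s,\lambda)|\leq C_N\tau^{-N}(1+|s|)^{-N}$, and integrating in $s$ yields $|\rho_t|\leq C_k(t\lambda^{2m_1})^{-k}$, which is \eqref{rho}.

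The improved identity \eqref{better-High} is where the real work lies. I would keep the same $\rho_t$ and the same main term $\tau\,\eta\int\chi\tilde a_t\,e^{i\tau u s}\,ds$, but estimate $\tilde a_t$ more sharply. Under (H3) one has $|\Phi_\lambda''(u)|=\lambda^{4-2m_1}|\phi''(\lambda^2u)|\sim\lambda^{2(\alpha_1-m_1)}u^{\alpha_1-2}$, which is bounded below by $c\,\lambda^{2(\alpha_1-m_1)}$ on $\operatorname{supp}g(\sqrt{\cdot})$ and of constant sign. Applying van der Corput's lemma (Lemma \ref{van-der-Corput}) to the oscillatory integral defining $\tilde a_t$, with large parameter $\tau$ and second-derivative lower bound $\delta\sim\lambda^{2(\alpha_1-m_1)}$, produces $|\tilde a_t(s,\lambda)|\lesssim(\tau\lambda^{2(\alpha_1-m_1)})^{-1/2}\|g\|_{C^1}=\tau^{-1/2}\lambda^{m_1-\alpha_1}\|g\|_{C^1}$. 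Rewriting the main term as $t^{1/2}\lambda^{2m_1-\alpha_1}\,\eta\int a_t\,e^{i\tau us}\,ds$ with $a_t=\tau^{1/2}\lambda^{\alpha_1-m_1}\chi\tilde a_t$ — the prefactor being $\tau^{1/2}\lambda^{m_1-\alpha_1}=t^{1/2}\lambda^{2m_1-\alpha_1}$ — keeps $a_t$ bounded by $C(\|g\|_{C^1},\phi)$ and establishes \eqref{better-High}. The main obstacle throughout is to guarantee that the phase normalization, and hence all oscillatory-integral estimates, are genuinely uniform in $\lambda$; for \eqref{better-High} the crux is checking that van der Corput's hypothesis holds uniformly so that precisely the power $\lambda^{m_1-\alpha_1}$, and no worse, is gained.
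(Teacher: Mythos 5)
Your proof is correct and is essentially the paper's own argument written in rescaled variables: your $\tilde a_t(s,\lambda)$ coincides (up to a factor of $2\pi$) with the paper's $\lambda^{-2}\Psi_\lambda(t\lambda^{2m_1-2}s)$, and your cutoff $\chi(s)$ adapted to the range of $\Phi_\lambda'$ plays exactly the role of the paper's cutoff $\tau\left(\xi/(t\lambda^{2m_1-2})\right)$ with $c_0$ large. Both arguments then conclude identically --- non-stationary phase (integration by parts) for the remainder bound \eqref{rho}, the trivial bound for \eqref{a}, and van der Corput under (H3) followed by absorbing the factor $t^{1/2}\lambda^{\alpha_1}$ (equivalently your $\tau^{1/2}\lambda^{\alpha_1-m_1}$) into $a_t$ to obtain \eqref{better-High}.
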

	\begin{proof}
		Let $k\in\mathbb{N}$ and $t>0$. For $\lambda\geq1$, we denote by $\Psi_\lambda(\xi)$ the Fourier transform of $g(\lambda^{-1}\sqrt{x})e^{it\phi(x)}$, i.e.,
		\begin{align}\label{Fourier}
			\Psi_\lambda(\xi)&=\int g(\lambda^{-1}\sqrt{x})e^{it\phi(x)}e^{-ix\xi}dx\nonumber\\
			&=\lambda^2\int g(\sqrt{u})e^{i\left[t\phi(\lambda^2 u)-\lambda^2u\xi\right]}du.
		\end{align}
		Let $\tau\in\C^\infty(\mathbb{R})$ supported in $[2c_0^{-1},2c_0]$ with $\tau\equiv1$ in $[4c_0^{-1},c_0]$ where $c_0>2$ will be determined later. By the Fourier inversion formula, we have
		\begin{align*}
			g(\lambda^{-1}\sqrt{x})e^{it\phi(x)}&=\eta(\lambda^{-2}x)\int\left(1-\tau\left(\frac{\xi}{t\lambda^{2m_1-2}}\right)\right)\Psi_\lambda(\xi)e^{i\xi x}d\xi\\
			&\quad +\eta(\lambda^{-2}x)\int\tau\left(\frac{\xi}{t\lambda^{2m_1-2}}\right)\Psi_\lambda(\xi)e^{i\xi x}d\xi\\
			&=:\rho_t(x,\lambda)+A_{t,\lambda}(x)
		\end{align*}
		where $\eta\in C^\infty(\mathbb{R})$ is supported in $[1/5,5]$ and $\eta\equiv1$ on $[1/4,4]$.
		
		Observe that 
		\begin{equation*}
			\frac{\partial}{\partial u} \left[t\phi(\lambda^2 u)-\lambda^2u\xi\right]=\lambda^2t\phi'(\lambda^2u)-\lambda^2\xi.
		\end{equation*}
		We note that the integrand in the expression for $\rho_t(x,\lambda)$ is supported where either $\xi<4c_0^{-1}t\lambda^{2m_1-2}$ or $\xi>c_0t\lambda^{2m_1-2}$. In this situation, by (H1), we can choose $c_0$ large enough so that 
		\begin{equation*}
			\partial_u \left[t\phi(\lambda^2 u)-\lambda^2u\xi\right]\gtrsim\lambda^2|\xi|+t\lambda^{2m_1}.
		\end{equation*}
		Thus, using integration by parts in \eqref{Fourier}, we have for these $\xi$ that
		\begin{equation*}
			\left|\Psi_\lambda(\xi)\right|\leq C(k,\|g\|_{C^k},\phi) \lambda^2\left(\lambda^2|\xi|+t\lambda^{2m_1}\right)^{-k},\,\forall k\geq0.
		\end{equation*}
		which implies
		\begin{equation*}
			|\rho_t(x,\lambda)|\leq C(k,\|g\|_{C^k},\phi)\left(t\lambda^{2m_1}\right)^{-k},\,\forall k\geq0,
		\end{equation*}
		which proves \eqref{rho}.
		
		Next we estimate the term $A_{t,\lambda}(x)$. By a change of variables, we have
		\begin{align*}
			A_{t,\lambda}(x)&=\eta(\lambda^{-2}x)\int\tau\left(\frac{\xi}{t\lambda^{2m_1-2}}\right)\Psi_\lambda(\xi)e^{i\xi x}d\xi\\
			&=t\lambda^{2m_1-2}\eta(\lambda^{-2}x)\int\tau(s)\Psi_\lambda\left(t\lambda^{2m_1-2}s\right)e^{ixt\lambda^{2m_1-2}s}ds\\
			&=t\lambda^{2m_1}\eta(\lambda^{-2}x)\int a_t(s,\lambda)e^{ixt\lambda^{2m_1-2}s}ds\nonumber,
		\end{align*}
		where
		\begin{equation}\label{better-a}
			a_t(s,\lambda)=\lambda^{-2}\tau(s)\Psi_\lambda\left(t\lambda^{2m_1-2}s\right).
		\end{equation}
		It is clear that $\supp a_t(\cdot,\lambda)\in [2c_0^{-1},2c_0]$ and $|a_t(s,\lambda)|\leq C(\|g\|_{C})$, which proves \eqref{a} and also \eqref{High}.
		
		In addtion, if $\phi$ satisfies (H3), we rewrite $a_t(s,\lambda)$ in \eqref{better-a} as
		\begin{align*}
			a_t(s,\lambda)&=\lambda^{-2}\tau(s)\Psi_\lambda\left(t\lambda^{2m_1-2}s\right)\\
			&=\tau(s)\int g(\sqrt{u})e^{i\left[t\phi(\lambda^2 u)-t\lambda^{2m_1}us\right]}du.
		\end{align*}
		Noting that $g$ is supported in $[1/2,2]$, we have $u\in[1/4,4]$. For these $u$, it follows from (H3) that
		\begin{equation*}
			\left|\frac{\partial^2}{\partial u^2} \left[t\phi(\lambda^2 u)-t\lambda^{2m_1}us\right]\right|=\left|t\lambda^4\phi''(\lambda^2 u)\right|\sim t\lambda^{2\alpha_1}.
		\end{equation*}
		Therefore, by van der Corput's Lemma \ref{van-der-Corput}, we have
		\begin{equation*}
			|a_t(s,\lambda)|\leq C(\|g\|_{C^1},\phi)\left(t\lambda^{2\alpha_1}\right)^{-\frac{1}{2}}=C(\|g\|_{C^1},\phi)t^{-\frac{1}{2}}\lambda^{-\alpha_1}.
		\end{equation*}
		Replacing $a_t(s,\lambda)$ by $t^{\frac{1}{2}}\lambda^{\alpha_1}a_t(s,\lambda)$, we deduce \eqref{better-High}.
	\end{proof}
	
	In a similar way we obtained the following estimates.
	\begin{theorem} \label{Low-frequency} Assume $\phi$ satisfies (H2) and $g\in C^\infty(\mathbb{R})$ is supported in $[1/2,2]$. Then there exist $c_0>1$, and functions $\rho_t(x,\lambda)$ and $a_t(s,\lambda)$ defined on $\mathbb{R}^2$ for each $t$ satisfying
		\begin{equation*}  
			{\rm{supp}}\ \rho_t(\cdot,\lambda)\subset[\lambda^2/5,5\lambda^2]\quad \text{and} \quad |\rho_t(x,\lambda)|\leq C(k,\|g\|_{C^k},\phi)(t\lambda^{2m_2})^{-k},\, k\geq0,
		\end{equation*}
		and
		\begin{equation*}
			{\rm{supp}}\ a_t(\cdot,\lambda)\subset[2c_0^{-1},2c_0]\quad \text{and} \quad |a_t(s,\lambda)|\leq C(\|g\|_{C^1},\phi),
		\end{equation*}
		such that
		If $\phi$ satisfies (H2), then
		\begin{equation*}
			g(\lambda^{-1}\sqrt{x})e^{it\phi(x)}=\rho_t(x,\lambda)+t\lambda^{2m_2}\eta(\lambda^{-2}x)\int e^{ixt\lambda^{2m_2-2}s}a_t(s,\lambda)ds,
		\end{equation*}
		for all $x,t>0$ and $0<\lambda<1$, where $\eta\in C^\infty(\mathbb{R})$ is supported in $[1/5,5]$ and $\eta\equiv1$ on $[1/4,4]$.
		
		In addtion, if $\phi$ satisfies (H4), then
		\begin{equation*}
			g(\lambda^{-1}\sqrt{x})e^{it\phi(x)}=\rho_t(x,\lambda)+t^{\frac{1}{2}}\lambda^{2m_2-\alpha_2}\eta(\lambda^{-2}x)\int e^{ixt\lambda^{2m_2-2}s}a_t(s,\lambda)ds,
		\end{equation*}
		for all $x,t>0$ and $0<\lambda<1$.
	\end{theorem}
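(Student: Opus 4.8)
The plan is to follow the proof of Theorem \ref{High-frequecy} step by step, the only changes being the replacement of the high-frequency exponents $(m_1,\alpha_1)$ and hypotheses (H1), (H3) by their low-frequency counterparts $(m_2,\alpha_2)$ and (H2), (H4), together with the restriction to the range $0<\lambda<1$. As before, I would set $\Psi_\lambda(\xi)=\int g(\lambda^{-1}\sqrt x)e^{it\phi(x)}e^{-ix\xi}\,dx$ and rescale via $x=\lambda^2u$ to obtain $\Psi_\lambda(\xi)=\lambda^2\int g(\sqrt u)e^{i[t\phi(\lambda^2u)-\lambda^2u\xi]}\,du$, so that the integration variable $u$ is confined to $[1/4,4]$ by the support of $g$. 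Inserting the cutoff $\tau(\xi/(t\lambda^{2m_2-2}))$ splits $g(\lambda^{-1}\sqrt x)e^{it\phi(x)}$ into the non-stationary piece $\rho_t(x,\lambda)$ and the oscillatory piece $A_{t,\lambda}(x)$ exactly as in the high-frequency case.

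The first key step is the lower bound on the phase derivative off the stationary set. Since $0<\lambda<1$ and $u\in[1/4,4]$, the argument $\lambda^2u$ lies below $1$ (up to the harmless range where $\lambda$ is close to $1$), so (H2) applies and gives $t\lambda^2\phi'(\lambda^2u)\sim t\lambda^2(\lambda^2u)^{m_2-1}\sim t\lambda^{2m_2}$, using $u\sim1$. Consequently the critical frequency sits at $\xi\sim t\lambda^{2m_2-2}$, precisely the scale singled out by $\tau$; on the support of $1-\tau(\xi/(t\lambda^{2m_2-2}))$ one chooses $c_0$ large (uniformly in $\lambda$) so that $\partial_u[t\phi(\lambda^2u)-\lambda^2u\xi]\gtrsim\lambda^2|\xi|+t\lambda^{2m_2}$. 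Repeated integration by parts in the $u$-integral then yields $|\Psi_\lambda(\xi)|\leq C\lambda^2(\lambda^2|\xi|+t\lambda^{2m_2})^{-k}$ for every $k$, whence $|\rho_t(x,\lambda)|\lesssim(t\lambda^{2m_2})^{-k}$, which is the stated bound on $\rho_t$.

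For the oscillatory term I would change variables $\xi=t\lambda^{2m_2-2}s$ to bring $A_{t,\lambda}(x)$ into the form $t\lambda^{2m_2}\eta(\lambda^{-2}x)\int a_t(s,\lambda)e^{ixt\lambda^{2m_2-2}s}\,ds$ with $a_t(s,\lambda)=\lambda^{-2}\tau(s)\Psi_\lambda(t\lambda^{2m_2-2}s)$; the support of $\tau$ gives $\supp a_t(\cdot,\lambda)\subset[2c_0^{-1},2c_0]$ and the trivial estimate $|\Psi_\lambda|\leq C\lambda^2$ gives $|a_t(s,\lambda)|\leq C$. This proves the first subordination formula. For the refinement under (H4), I would rewrite $a_t(s,\lambda)=\tau(s)\int g(\sqrt u)e^{i[t\phi(\lambda^2u)-t\lambda^{2m_2}us]}\,du$ and compute the second $u$-derivative of the phase, namely $t\lambda^4\phi''(\lambda^2u)\sim t\lambda^{2\alpha_2}$ by (H4) (again valid because $\lambda^2u<1$). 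Van der Corput's Lemma \ref{van-der-Corput} then gives $|a_t(s,\lambda)|\lesssim(t\lambda^{2\alpha_2})^{-1/2}=t^{-1/2}\lambda^{-\alpha_2}$, and absorbing the factor $t^{1/2}\lambda^{\alpha_2}$ into $a_t$ produces the second formula.

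The main point requiring care—rather than a genuine obstacle—is bookkeeping the direction of the $\lambda$-power inequalities, which reverse relative to the high-frequency case because now $\lambda<1$: one must check that the stationary frequency $t\lambda^{2m_2-2}$ and the second-derivative lower bound $t\lambda^{2\alpha_2}$ come out with the correct signs in the exponents, and that the cutoff parameter $c_0$ together with all implicit constants can be taken independent of $\lambda\in(0,1)$. The only analytic subtlety is confirming that $\lambda^2u$ stays in the regime $0<r<1$ where (H2) and (H4) are in force; since $u\in[1/4,4]$ this holds for all sufficiently small $\lambda$, and the boundary range near $\lambda=1$ is absorbed by the comparability constants, exactly as the threshold $r\geq1$ versus $r\geq1/4$ is handled in Theorem \ref{High-frequecy}.
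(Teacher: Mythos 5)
Your proposal is correct and is exactly what the paper intends: the paper's own proof of Theorem \ref{Low-frequency} consists of the single remark that it is ``similar to that of Theorem \ref{High-frequecy}'', and your step-by-step adaptation (replacing $(m_1,\alpha_1)$, (H1), (H3) by $(m_2,\alpha_2)$, (H2), (H4) and tracking the reversed $\lambda$-power directions for $0<\lambda<1$) is that omitted argument, carried out at the same level of rigor as the paper's high-frequency proof, including the same treatment of the transition region where $\lambda^2 u$ crosses the threshold $r=1$.
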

	\begin{proof}
		The proof of this theorem is similar to that of Theorem \ref{High-frequecy} and we omit details.
	\end{proof}
	
	\subsection{Decay estimates for $e^{it\phi(L)}$}
	In this section, we apply the subordination formulas in the previous section to obtain a high and low frequency decay estimate for the semigroup $e^{it\phi(L)}$.
	
	From Theorem \ref{High-frequecy}, we have the following high frequency decay estimate.
	\begin{theorem}\label{high-frequency-result}
		Assume $L$ satisfies (A1) and (A2),  and $\psi\in C^\infty(\mathbb{R})$ is supported in $[1/2,2]$.
		
		If $\phi$ satisfies (H1), then
		\begin{equation*} \left|\psi(\lambda^{-1}\sqrt{L})e^{it\phi(L)}f\right|\lesssim{|t|}^{-\frac{n-2}{2}}\lambda^{(1-m_1)n+2m_1}\left\|f\right\|_{L^1},\,\lambda\geq1, |t|<T_0,n\geq2;
		\end{equation*}
		In addition, if $\phi$ satisfies (H3), then
		\begin{equation*} \left|\psi(\lambda^{-1}\sqrt{L})e^{it\phi(L)}f\right|\lesssim{|t|}^{-\frac{n-1}{2}}\lambda^{(1-m_1)n+2m_1-\alpha_1}\left\|f\right\|_{L^1},\,\lambda\geq1, |t|<T_0,n\geq1.
		\end{equation*}
	\end{theorem}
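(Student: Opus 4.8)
The plan is to feed the scalar subordination identity of Theorem~\ref{High-frequecy} (with $g=\psi$) into the spectral calculus of $L$ and then bound the two resulting pieces separately, using the dispersive bound (A1) for the oscillatory part and the mapping properties of frequency cut-offs (the kernel estimate (a) and \eqref{L^p-L^q}) for the remainder. Since \eqref{High} is an identity between bounded functions of the real variable $x$ on $[0,\infty)$, which contains the spectrum of $L$, the spectral theorem turns it into the operator identity
\begin{equation*}
\psi(\lambda^{-1}\sqrt L)e^{it\phi(L)}=\rho_t(L,\lambda)+t\lambda^{2m_1}\eta(\lambda^{-2}L)\int a_t(s,\lambda)\,e^{i t\lambda^{2m_1-2}s\,L}\,ds,
\end{equation*}
the $s$-integral being pulled outside the functional calculus. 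I would then estimate the $L^1\to L^\infty$ norm of each summand; for the (H3)-improved bound I would use \eqref{better-High} in place of \eqref{High}, which only replaces the prefactor $t\lambda^{2m_1}$ by $t^{1/2}\lambda^{2m_1-\alpha_1}$ while keeping the same remainder $\rho_t$.

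For the main (oscillatory) term I would discard the amplitude via $|a_t(s,\lambda)|\lesssim1$ and integrate over the fixed compact window $s\in[2c_0^{-1},2c_0]$, reducing matters to
\begin{equation*}
\bigl\|\eta(\lambda^{-2}L)\,e^{i\tau L}\bigr\|_{L^1\to L^\infty},\qquad \tau=t\lambda^{2m_1-2}s\sim t\lambda^{2m_1-2}.
\end{equation*}
Writing $\eta(\lambda^{-2}L)=\tilde\eta(\lambda^{-1}\sqrt L)$ with $\tilde\eta(r)=\eta(r^2)$ even and smooth, part (a) of the preliminary kernel lemma gives $\|\eta(\lambda^{-2}L)\|_{L^\infty\to L^\infty}\lesssim1$ uniformly in $\lambda$, since its kernel integrates to a $\lambda$-independent constant. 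As $\eta(\lambda^{-2}L)$ and $e^{i\tau L}$ commute, composing this with (A1), $\|e^{i\tau L}\|_{L^1\to L^\infty}\lesssim|\tau|^{-n/2}$, produces a bound $\lesssim(t\lambda^{2m_1-2})^{-n/2}$. Restoring the prefactor $t\lambda^{2m_1}$ and carrying out the arithmetic $2m_1-(m_1-1)n=(1-m_1)n+2m_1$ and $1-\tfrac n2=-\tfrac{n-2}{2}$ yields exactly $|t|^{-(n-2)/2}\lambda^{(1-m_1)n+2m_1}$; with the (H3) prefactor the same computation gives $|t|^{-(n-1)/2}\lambda^{(1-m_1)n+2m_1-\alpha_1}$.

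For the remainder I would observe that $\rho_t(\cdot,\lambda)$ is supported in $[\lambda^2/5,5\lambda^2]$, so $\rho_t(L,\lambda)=\Phi_{t,\lambda}(\lambda^{-1}\sqrt L)$ with $\Phi_{t,\lambda}(r)=\rho_t(\lambda^2r^2,\lambda)$ supported in a fixed annulus and, by \eqref{rho} together with the analogous derivative bounds issuing from the same integration by parts, having all seminorms $\lesssim(t\lambda^{2m_1})^{-k}$ for every $k\ge0$. Then \eqref{L^p-L^q} with $p=1,q=\infty$ at scale $\lambda^{-1}$ gives $\|\rho_t(L,\lambda)\|_{L^1\to L^\infty}\lesssim\lambda^n(t\lambda^{2m_1})^{-k}$. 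Choosing $k=\tfrac{n-2}{2}$ collapses the right-hand side to precisely $|t|^{-(n-2)/2}\lambda^{(1-m_1)n+2m_1}$, which is admissible exactly when $n\ge2$; for the (H3) bound I would instead take $k=\tfrac{n-1}{2}\ge0$, obtaining $|t|^{-(n-1)/2}\lambda^{(1-m_1)n+m_1}$, and absorb it into the target using $\lambda\ge1$ and the inequality $\alpha_1\le m_1$ recorded after (H1)--(H4), since $(1-m_1)n+m_1\le(1-m_1)n+2m_1-\alpha_1$.

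In my view the main obstacle is the bookkeeping for the remainder rather than the oscillatory term: one must tune the free parameter $k$ in \eqref{rho} to the exact value $(n-2)/2$ (resp.\ $(n-1)/2$) so that the rapidly decaying remainder matches, rather than beats or misses, the main contribution — and it is precisely this tuning that forces the restriction $n\ge2$ in the first estimate and makes the structural inequality $\alpha_1\le m_1$ indispensable in the second. A secondary point requiring care is that invoking \eqref{L^p-L^q} needs the full Schwartz seminorms of $\Phi_{t,\lambda}$, not merely its sup bound, so I would first record that the integration by parts behind \eqref{rho} also controls the derivatives of $\rho_t$ by the same power $(t\lambda^{2m_1})^{-k}$; and one should track the admissible time range when applying (A1) after the rescaling $\tau=t\lambda^{2m_1-2}s$.
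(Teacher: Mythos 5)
Your proposal is correct, and on the oscillatory term it coincides with the paper's own proof: both insert the subordination identity \eqref{High} (resp.\ \eqref{better-High}) into the spectral calculus, bound $\|\eta(\lambda^{-2}L)\|_{L^\infty\to L^\infty}\lesssim1$ uniformly in $\lambda$, apply (A1) at the rescaled times $t\lambda^{2m_1-2}s$ with $s\sim1$, and carry out the same arithmetic, including the use of $\alpha_1\leq m_1$ and $\lambda\geq1$ in the (H3) case. The genuine difference is in the remainder $\rho_t(L,\lambda)$. The paper writes $\rho_t(L,\lambda)=\varphi(\lambda^{-1}\sqrt{L})\,\rho_t(L,\lambda)\,\varphi(\lambda^{-1}\sqrt{L})$ for a fixed bump $\varphi$ equal to $1$ on $[1/5,5]$, and factors the norm through $L^1\to L^2\to L^2\to L^\infty$: the two outer factors cost $\lambda^{n/2}$ each by \eqref{L^p-L^q}, while the middle factor is controlled by the spectral theorem via $\|\rho_t(L,\lambda)\|_{L^2\to L^2}\leq\|\rho_t(\cdot,\lambda)\|_{L^\infty}\lesssim(t\lambda^{2m_1})^{-\alpha}$, so that only the pointwise bound \eqref{rho} --- exactly what Theorem \ref{High-frequecy} records --- is ever needed. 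You instead write $\rho_t(L,\lambda)=\Phi_{t,\lambda}(\lambda^{-1}\sqrt{L})$ and invoke \eqref{L^p-L^q} with $p=1$, $q=\infty$ directly, which, as you yourself note, requires uniform Schwartz-seminorm (derivative) bounds on $\Phi_{t,\lambda}$; these are not stated in Theorem \ref{High-frequecy} and must be re-derived by repeating its integration by parts (this does work: each $x$-derivative produces a factor $\xi$ that is absorbed by the bound $\lambda^2(\lambda^2|\xi|+t\lambda^{2m_1})^{-k}$, valid for every $k$, and the rescaling $x=\lambda^2r^2$ keeps $r\sim1$ on the support, with $\Phi_{t,\lambda}$ automatically even). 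So your route is viable but strictly more laborious at this step; the paper's sandwiching trick is precisely the device that lets one get away with the sup-norm alone, and is worth internalizing. Both arguments then make the same exponent choices $(n-2)/2$ (forcing $n\geq2$) and $(n-1)/2$. One shared caveat, which you flag but do not resolve and which the paper passes over silently: when $T_0<\infty$ and $m_1>1$, the rescaled times $t\lambda^{2m_1-2}s$ at which (A1) is invoked may exceed $T_0$, so strictly speaking the argument as written needs (A1) on the larger time range or a restriction on $t\lambda^{2m_1-2}$.
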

	\begin{proof}
		Let $\lambda\geq 1$ and $t>0$. If $\phi$ satisfies (H1), by \eqref{High} in Theorem \ref{High-frequecy} and spectral theory, there exist functions $\rho$, $a$ and $\eta$ as in Theorem \ref{High-frequecy} such that
		\begin{align*}
			\psi(\lambda^{-1}\sqrt{L})e^{it\phi(L)}&=\rho_t(L,\lambda)+t\lambda^{2m_1}\eta(\lambda^{-2}L)\int e^{it\lambda^{2m_1-2}sL}a_t(s,\lambda)ds\\
			&=\rho_t(L,\lambda)+A_{t,\lambda}(L).
		\end{align*}
		First we estimate the term related to $A_{t,\lambda}(L)$. It follows from (A1), (A2), \eqref{L^p-L^q} and \eqref{a} that
		\begin{align*}
			\|A_{t,\lambda}(L)\|_{L^1\to L^\infty}&\lesssim t\lambda^{2m_1}\left[t\lambda^{2m_1-2}\right]^{-\frac{n}{2}}\int_{2c_0^{-1}}^{2c_0}s^{-\frac{n}{2}}|a_t(s,\lambda)|ds\\
			&\lesssim{t}^{-\frac{n-2}{2}}\lambda^{(1-m_1)n+2m_1}.
		\end{align*}
		Next we turn to the term $\rho_t(L,\lambda)$. Let $\varphi\in C^\infty(\mathbb{R})$ supported in $[1/6,6]$ and $\varphi\equiv1$ in $[1/5,5]$. Since $\rho_t(\cdot,\lambda)$ is supported in $[\lambda^2/5,5\lambda^2]$, we have
		\begin{equation*}
			\rho_t(L,\lambda)=\varphi(\lambda^{-1}\sqrt{L})\rho_t(L,\lambda)\varphi(\lambda^{-1}\sqrt{L}).
		\end{equation*}
		Hence, we get
		\begin{equation*}
			\|\rho_t(L,\lambda)\|_{L^1\to L^\infty}\leq \|\varphi(\lambda^{-1}\sqrt{L})\|_{L^1\to L^2}\|\rho_t(L,\lambda)\|_{L^2\to L^2}\|\varphi(\lambda^{-1}\sqrt{L})\|_{L^2\to L^\infty}.
		\end{equation*}
		It yields from \eqref{L^p-L^q} that
		\begin{equation}\label{L1-Linfty}
			\|\varphi(\lambda^{-1}\sqrt{L})\|_{L^1\to L^2}\lesssim \lambda^\frac{n}{2},\,\text{ and }\|\varphi(\lambda^{-1}\sqrt{L})\|_{L^2\to L^\infty}\lesssim \lambda^\frac{n}{2}.
		\end{equation}
		Using \eqref{rho}, we have
		\begin{equation}\label{L2}
			\|\rho_t(L,\lambda)\|_{L^2\to L^2}\leq \|\rho_t(\cdot,\lambda)\|_{L^\infty}\lesssim_\alpha(\lambda^{2m_1}t)^{-\alpha}, \, \forall \alpha\geq0.
		\end{equation}
		Therefore, when $n\geq2$, combined \eqref{L1-Linfty} and \eqref{L2},  taking $\alpha=\frac{n-2}{2}$, we obtain
		\begin{equation*}
			\|\rho_t(L,\lambda)\|_{L^1\to L^\infty}\lesssim \lambda^n(\lambda^{2m_1}t)^{-\frac{n-2}{2}}\lesssim t^{-\frac{n-2}{2}}\lambda^{(1-m_1)n+2m_1}.
		\end{equation*}
		Summing up, we have proved that
		\begin{equation*}
			\|\psi(\lambda^{-1}\sqrt{L})e^{it\phi(L)}\|_{L^1\to L^\infty}\lesssim t^{-\frac{n-2}{2}}\lambda^{(1-m_1)n+2m_1}.
		\end{equation*}
		
		In addition, if $\phi$ satisfies (H3), by \eqref{better-High} in Theorem \ref{High-frequecy}, we have
		\begin{align*}
			\psi(\lambda^{-1}\sqrt{L})e^{it\phi(L)}&=\rho_t(L,\lambda)+t^{\frac{1}{2}}\lambda^{2m_1-\alpha_1}\eta(\lambda^{-2}L)\int e^{it\lambda^{2m_1-2}sL}a_t(s,\lambda)ds\\
			&=\rho_t(L,\lambda)+\tilde{A}_{t,\lambda}(L).
		\end{align*}
		Argued similarly as (H1), it follows from (A1), (A2), \eqref{L^p-L^q} and \eqref{a} that
		\begin{align*}
			\|A_{t,\lambda}(L)\|_{L^1\to L^\infty}&\lesssim t^{\frac{1}{2}}\lambda^{2m_1-\alpha_1}\left[t\lambda^{2m_1-2}\right]^{-\frac{n}{2}}\int_{2c_0^{-1}}^{2c_0}s^{-\frac{n}{2}}|a_t(s,\lambda)|ds\\
			&\lesssim{t}^{-\frac{n-1}{2}}\lambda^{(1-m_1)n+2m_1-\alpha_1}.
		\end{align*}
		Combined \eqref{L1-Linfty} and \eqref{L2},  taking $\alpha=\frac{n-1}{2}$ and noting the fact $\alpha_1\leq m_1$, we obtain
		\begin{align*}
			\|\rho_t(L,\lambda)\|_{L^1\to L^\infty}\lesssim \lambda^n(\lambda^{2m_1}t)^{-\frac{n-1}{2}}&=t^{-\frac{n-1}{2}}\lambda^{(1-m_1)n+m_1}\\
			&\leq {t}^{-\frac{n-1}{2}}\lambda^{(1-m_1)n+2m_1-\alpha_1},\,\forall \lambda\geq 1.
		\end{align*}
		Summing up, we have proved that
		\begin{equation*}
			\|\psi(\lambda^{-1}\sqrt{L})e^{it\phi(L)}\|_{L^1\to L^\infty}\lesssim {t}^{-\frac{n-1}{2}}\lambda^{(1-m_1)n+2m_1-\alpha_1},\,\forall \lambda\geq 1.
		\end{equation*}
	\end{proof}
	
	By Theorem \ref{Low-frequency}, we also obtain the following low frequency decay estimate.
	\begin{theorem}\label{low-frequency-result}
		Assume $L$ satisfies (A1) and (A2), and $\psi\in C^\infty(\mathbb{R})$ is supported in $[1/2,2]$.
		
		If $\phi$ satisfies (H2), then
		\begin{equation*} \left|\psi(\lambda^{-1}\sqrt{L})e^{it\phi(L)}f\right|\lesssim{|t|}^{-\frac{n-2}{2}}\lambda^{(1-m_2)n+2m_2}\left\|f\right\|_{L^1},\,0<\lambda<1, |t|<T_0, n\geq2;
		\end{equation*}
		In addition, if $\phi$ satisfies (H4), then
		\begin{equation*} \left|\psi(\lambda^{-1}\sqrt{L})e^{it\phi(L)}f\right|\lesssim{|t|}^{-\frac{n-1}{2}}\lambda^{(1-m_2)n+2m_2-\alpha_2}\left\|f\right\|_{L^1},\,0<\lambda<1, |t|<T_0, n\geq1.
		\end{equation*}
	\end{theorem}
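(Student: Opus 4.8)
The plan is to run the proof of Theorem~\ref{high-frequency-result} essentially verbatim, substituting the low-frequency subordination formula of Theorem~\ref{Low-frequency} for the high-frequency one of Theorem~\ref{High-frequecy}. Fix $0<\lambda<1$ and $t>0$. Applying Theorem~\ref{Low-frequency} with $g=\psi$ together with spectral theory, I would decompose
\begin{equation*}
\psi(\lambda^{-1}\sqrt{L})e^{it\phi(L)}=\rho_t(L,\lambda)+t\lambda^{2m_2}\eta(\lambda^{-2}L)\int e^{it\lambda^{2m_2-2}sL}a_t(s,\lambda)\,ds=:\rho_t(L,\lambda)+A_{t,\lambda}(L)
\end{equation*}
under (H2), and the analogous decomposition with prefactor $t^{1/2}\lambda^{2m_2-\alpha_2}$ under the additional hypothesis (H4), where $\rho_t$ and $a_t$ enjoy the support and size bounds recorded in Theorem~\ref{Low-frequency}.

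For the term $A_{t,\lambda}(L)$ I would invoke (A1) to bound $\|e^{it\lambda^{2m_2-2}sL}\|_{L^1\to L^\infty}\lesssim(t\lambda^{2m_2-2}s)^{-n/2}$ and integrate against $a_t(\cdot,\lambda)$, which is supported in $[2c_0^{-1},2c_0]$ and uniformly bounded there. Under (H2) this gives $\|A_{t,\lambda}(L)\|_{L^1\to L^\infty}\lesssim t\lambda^{2m_2}(t\lambda^{2m_2-2})^{-n/2}\lesssim t^{-\frac{n-2}{2}}\lambda^{(1-m_2)n+2m_2}$, and under (H4) the same computation with the smaller prefactor $t^{1/2}\lambda^{2m_2-\alpha_2}$ produces $t^{-\frac{n-1}{2}}\lambda^{(1-m_2)n+2m_2-\alpha_2}$.

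For $\rho_t(L,\lambda)$ I would exploit its spectral support in $[\lambda^2/5,5\lambda^2]$ to write $\rho_t(L,\lambda)=\varphi(\lambda^{-1}\sqrt{L})\rho_t(L,\lambda)\varphi(\lambda^{-1}\sqrt{L})$ for a suitable cutoff $\varphi$, bound the two outer factors by $\lambda^{n/2}$ each via \eqref{L^p-L^q}, and estimate the middle factor through $\|\rho_t(L,\lambda)\|_{L^2\to L^2}\le\|\rho_t(\cdot,\lambda)\|_{L^\infty}\lesssim_\alpha(t\lambda^{2m_2})^{-\alpha}$. Choosing $\alpha=(n-2)/2$ in the (H2) case directly yields $t^{-\frac{n-2}{2}}\lambda^{(1-m_2)n+2m_2}$, matching the $A_{t,\lambda}$ contribution, while $\alpha=(n-1)/2$ in the (H4) case yields $t^{-\frac{n-1}{2}}\lambda^{(1-m_2)n+m_2}$.

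The one genuinely new point, and the step I expect to require care, is the final comparison of the two powers of $\lambda$ in the (H4) case. Since now $0<\lambda<1$, the map $c\mapsto\lambda^{c}$ is \emph{decreasing}, so every exponent inequality runs opposite to its direction in the high-frequency argument: I would need $\lambda^{(1-m_2)n+m_2}\le\lambda^{(1-m_2)n+2m_2-\alpha_2}$, which for $0<\lambda<1$ is equivalent to $m_2\ge 2m_2-\alpha_2$, that is $\alpha_2\ge m_2$. This is precisely the inequality that (H2) together with (H4) guarantee, as noted in the Remark following (H4). With this comparison in hand, the $\rho$-contribution is dominated by the target exponent and summing the two pieces closes both estimates; the (H2)-only estimate needs no such comparison, as its $\rho$ and $A$ exponents already coincide.
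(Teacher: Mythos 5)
Your proposal is correct and is essentially the paper's own argument: the paper simply states that this proof is similar to that of Theorem \ref{high-frequency-result} and omits the details, which are exactly the ones you supply. In particular, you correctly pinpoint the only place where the low-frequency case genuinely differs --- since $0<\lambda<1$ the exponent comparison for the $\rho_t(L,\lambda)$ contribution reverses direction and requires $\alpha_2\geq m_2$ (which (H2) and (H4) guarantee, per the paper's Remark), mirroring the use of $\alpha_1\leq m_1$ in the high-frequency proof.
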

	\begin{proof}
		The proof of this theorem is similar to that of Theorem \ref{high-frequency-result} and we omit details.
	\end{proof}
	
	\begin{proof}[Proof of Theorem \ref{main-result}]
		Combining Theorem \ref{high-frequency-result} and Theorem \ref{low-frequency-result}, we conclude Theorem \ref{main-result}.  
	\end{proof}
	
	\section{Applications}
	In this section, we apply the decay estimates in Theorem \ref{main-result} to study Strichartz estimates for some concrete equations related to Hermite operators, twisted Laplacians and Laguerre operators, whose spectrum are strictly positive and discrete. By Remark \ref{Besov-property}, we only need the decay estimates for the high frequency $\lambda\geq1$ and we see $\dot{B}_{p,q}^{s,L}$ and $B_{p,q}^{s,L}$ as one without distinction in this section.
	
	\subsection{Several concrete equations}
	We just list several concrete equations in this subsection. The fractional Schr\"odinger equation is a homogeneous example and other equations, such as the Klein-Gorden equation, the beam equation and the fourth-order Schr\"odinger equations, are non-homogeneous cases. We can not only deal with the homogeneous situations but also the non-homogeneous cases. It is worth noticing that the list of applications is not
	exhaustive since we just intend to show the generality of our approach. 
	
	We shall obtain the $B_{p',2}^{-s,L}\to B_{p,2}^{s,L}$ decay estimates for $U_t=e^{it\phi(L)}$ as an application of Theorem \ref{main-result}.  
	We assume that for $2\leq p\leq \infty$, $s=s(p)\in\mathbb{R}$ and $0<\theta=\theta(p)\leq1$,
	\begin{equation}\label{General-Assumption}
		\|U_tf\|_{B_{p,2}^{s,L}}\lesssim |t|^{-\theta} \|f\|_{B_{p',2}^{-s,L}}, \,|t|<T_0.
	\end{equation}
	Above all, we introduce the following proposition to establish the general Strichartz esitimates for $U_t=e^{it\phi(L)}$. Our method is using duality argument. Since this argument is quite standard, we will omit the proof and refer the reader to \cite{KT} for details.
	\begin{proposition}\label{General-Strichartz}
		Assume $U_t$ satisfies \eqref{General-Assumption} for $p_1,p_2\geq 2$. For $0<T\leq T_0$, we have
		\begin{align*}
			\|U_tf\|_{L^\frac{2}{\theta(p_1)}\left((-T,T),B_{p_1,2}^{s(p_1),L}\right)}&\lesssim \|f\|_{L^2},\\
			\|\int_0^tU_{t-\tau}g(\tau,\cdot)d\tau\|_{L^\frac{2}{\theta(p_1)}\left((-T,T),B_{p_1,2}^{s(p_1),L}\right)}&\lesssim \|g\|_{L^{\left(\frac{2}{\theta(p_2)}\right)'}\left((-T,T),B_{p'_2,2}^{-s(p_2),L}\right)}.
		\end{align*}
	\end{proposition}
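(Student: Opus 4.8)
The plan is to run the abstract $TT^*$ (Keel--Tao) machinery of \cite{KT}, with the Lebesgue target spaces there replaced by the Besov spaces $B_{p,2}^{s,L}$ and the dispersive input supplied by \eqref{General-Assumption}. First I would record the two structural facts that set up the framework. Since $\phi(L)$ is self-adjoint, $U_t=e^{it\phi(L)}$ is a unitary group on $L^2(X,\mu)$, so that $\|U_tf\|_{L^2}=\|f\|_{L^2}$ and $U_t^*=U_{-t}$, $U_tU_s^*=U_{t-s}$; this is the energy estimate, sitting at the endpoint $p=2$, $s=0$, where $B_{2,2}^{0,L}\cong L^2$ by the spectral Littlewood--Paley decomposition underlying Definition \ref{homo-def}. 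The hypothesis \eqref{General-Assumption} then reads exactly as the operator bound $\|U_{t-s}\|_{B_{p',2}^{-s,L}\to B_{p,2}^{s,L}}\lesssim|t-s|^{-\theta(p)}$, which plays the role of the dispersive estimate.

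For the homogeneous estimate I would fix $p=p_1$, write $q=2/\theta(p_1)$, and set $Tf(t)=U_tf$ for $|t|<T$. By the $TT^*$ principle it suffices to bound $TT^*$ on the relevant time-Besov spaces, and since
\begin{equation*}
(TT^*G)(t)=\int_{-T}^{T}U_{t-s}G(s)\,ds,
\end{equation*}
the dispersive bound gives
\begin{equation*}
\|(TT^*G)(t)\|_{B_{p,2}^{s,L}}\lesssim\int_{-T}^{T}|t-s|^{-\theta(p)}\|G(s)\|_{B_{p',2}^{-s,L}}\,ds.
\end{equation*}
This is a one-dimensional fractional integral in $t$, and the Hardy--Littlewood--Sobolev inequality maps $L^{q'}\to L^q$ precisely when $\tfrac1{q'}=\tfrac1q+1-\theta(p)$, i.e. $\theta(p)=\tfrac2q$, which is exactly the admissibility relation $q=2/\theta(p)$. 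Thus the homogeneous estimate would follow at once.

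For the inhomogeneous estimate I would pass to the bilinear form
\begin{equation*}
\left\langle\int_0^tU_{t-\tau}g(\tau)\,d\tau,\ f(t)\right\rangle_{t,x}=\iint_{\tau<t}\bigl\langle U_{t-\tau}g(\tau),\,f(t)\bigr\rangle\,d\tau\,dt,
\end{equation*}
interpolate \eqref{General-Assumption} against the energy estimate so as to distribute the time decay between the two pairs $(p_1,\theta(p_1))$ and $(p_2,\theta(p_2))$, and reduce matters to a bilinear Hardy--Littlewood--Sobolev estimate in the time variables. Away from the double endpoint, the retarded truncation $\tau<t$ can then be reinstated by the Christ--Kiselev lemma.

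The hard part will be the endpoint case $\theta=1$, i.e. $q=2$, which is permitted by the hypothesis $0<\theta(p)\le1$. There the exponent $1-\theta=0$ lands on the forbidden boundary of Hardy--Littlewood--Sobolev, and Christ--Kiselev no longer applies, so the naive argument above breaks down. I expect this to be resolved only by the genuinely delicate part of Keel--Tao: a dyadic decomposition $|t-s|\sim2^j$ of the kernel, a Schur-type bound on each dyadic piece obtained by real interpolation between the energy and dispersive estimates, and a careful summation in $j$ that exploits the off-diagonal admissibility of the two pairs. Since this endpoint machinery is entirely standard once \eqref{General-Assumption} is in hand, I would ultimately invoke \cite{KT} for those details rather than reproduce them.
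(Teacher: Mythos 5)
Your proposal is correct and follows essentially the same route as the paper: the paper explicitly invokes the standard duality ($TT^*$/Keel--Tao) argument and refers to \cite{KT} for details, which is precisely the machinery you outline (unitarity on $L^2$, the dispersive bound \eqref{General-Assumption}, Hardy--Littlewood--Sobolev off the endpoint, Christ--Kiselev for the retarded term, and the Keel--Tao dyadic argument at $\theta=1$). Your write-up simply makes explicit the steps the paper leaves to the reader.
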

	
	\subsubsection{The fractional Schr\"odinger equation}
	We consider the fractional Schr\"{o}dinger equation ($0<\nu<1$) related to $L$
	\begin{equation}\label{FSchrEqu}
		\begin{cases}
			i\partial_tu+L^\nu u=g,\\
			u(0)=u_0.
		\end{cases}
	\end{equation}
	By Duhamel's principle, the solution is formally given by
	\begin{equation*}\label{solution1}
		u(t)=S_tu_0-i\int_0^tS_{t-\tau}g(s,\cdot)\,ds,
	\end{equation*}
	where $S_t=e^{itL^\nu}$ and it corresponds to the case when $\phi(r)=r^\nu$. By a simple calculation, 
	\begin{equation*}
		\phi'(r)=\nu r^{\nu-1}, \quad
		\phi''(r)=\nu(\nu-1)r^{\nu-2}.
	\end{equation*}
	We see that $\phi$ satisfies (H1)-(H4) with $m_1=\alpha_1=m_2=\alpha_2=\nu$.
	
	By Theorem \ref{main-result} and Plancherel formula, for $\lambda\geq1, |t|<T_0$, we have
	\begin{align*} \left\|\psi(\lambda^{-1}\sqrt{L})S_tf\right\|_{L^\infty}&\lesssim{|t|}^{-\frac{n-1}{2}}\lambda^{(1-\nu)n+\nu}\left\|f\right\|_{L^1},\\
		\left\|\psi(\lambda^{-1}\sqrt{L})S_tf\right\|_{L^2}&\lesssim\left\|f\right\|_{L^2}. 
	\end{align*}
	Using Riesz-Thorin interpolation, for $2\leq p\leq \infty$, it follows
	\begin{equation*}
		\left\|\psi(\lambda^{-1}\sqrt{L})S_tf\right\|_{L^p}\lesssim{|t|}^{-(n-1)\left(\frac{1}{2}-\frac{1}{p}\right)}\lambda^{2\left((1-\nu)n+\nu\right)\left(\frac{1}{2}-\frac{1}{p}\right)}\left\|f\right\|_{L^{p'}},\,\lambda\geq1, |t|<T_0.
	\end{equation*}
	Taking $\lambda=2^j,\forall j>0$ and because of $\psi_j(\sqrt{L})=\sum\limits_{l=-1}^1\psi_j(\sqrt{L})\psi_{j+l}(\sqrt{L})$, we obtain
	\begin{equation*}
		2^{-j\left((1-\nu)n+\nu\right)\left(\frac{1}{2}-\frac{1}{p}\right)}\left\|\psi_j(\sqrt{L})S_tf\right\|_{L^p}\lesssim{|t|}^{-(n-1)\left(\frac{1}{2}-\frac{1}{p}\right)}2^{j\left((1-\nu)n+\nu\right)\left(\frac{1}{2}-\frac{1}{p}\right)}\left\|\psi_j(\sqrt{L})f\right\|_{L^{p'}},
	\end{equation*}
	which indicates for $2\leq p\leq\infty$ that
	\begin{equation*}
		\|S_tf\|_{B_{p,2}^{s,L}}\lesssim {|t|}^{-(n-1)\left(\frac{1}{2}-\frac{1}{p}\right)}\|f\|_{B_{p',2}^{-s,L}},\, \text{with } s=-\left((1-\nu)n+\nu\right)\left(\frac{1}{2}-\frac{1}{p}\right).
	\end{equation*}
	
	Using Proposition \ref{General-Strichartz}, we obtain the Strichartz estimates for the fractional Schr\"odinger operator $S_t=e^{itL^\nu}$.
	\begin{theorem} \label{fractional} Assume $0<\nu<1$ and $n>1$. For $i=1,2$, let $p_i,q_i\geq 2$ and $s_i\in\mathbb{R}$ such that 
		\begin{equation*}
			\frac{2}{q_i}+\frac{n-1}{p_i}=\frac{n-1}{2}\quad \text{and}\quad s_i=-\left((1-\nu)n+\nu\right)\left(\frac{1}{2}-\frac{1}{p_i}\right),
		\end{equation*} 
		except $(q_i,p_i,n)=(2,\infty,3)$. Then the fractional Schr\"odinger operator $S_t=e^{itL^\nu}$ satisfies
		\begin{align*}
			\|S_tf\|_{L^{q_1}\left((-T,T),B_{p_1,2}^{s_1,L}\right)}&\lesssim \|f\|_{L^2},\\
			\|\int_0^tS_{t-\tau}g(\tau,\cdot)d\tau\|_{L^{q_1}\left((-T,T),B_{p_1,2}^{s_1,L}\right)}&\lesssim \|g\|_{L^{q'_2}\left((-T,T),B_{p'_2,2}^{-s_2,L}\right)}.
		\end{align*}
		for any $0<T\leq T_0$.
	\end{theorem}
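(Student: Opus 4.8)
The plan is to observe that the dispersive input has already been assembled, so Theorem~\ref{fractional} reduces to matching exponents and invoking Proposition~\ref{General-Strichartz}. First I would record that $\phi(r)=r^\nu$ satisfies (H1)--(H4) with $m_1=\alpha_1=m_2=\alpha_2=\nu$ (a one-line differentiation). This activates the sharper (H3)/(H4) branches of Theorem~\ref{main-result}, giving the frequency-localized estimate
\[
\big\|\psi(\lambda^{-1}\sqrt{L})S_tf\big\|_{L^\infty}\lesssim |t|^{-\frac{n-1}{2}}\lambda^{(1-\nu)n+\nu}\|f\|_{L^1},\qquad \lambda\ge1,\ |t|<T_0.
\]
Because the spectra of the Hermite operator, the twisted Laplacian and the Laguerre operator are strictly positive and discrete, Remark~\ref{Besov-property} lets me ignore the low-frequency regime and identify $\dot B^{s,L}_{p,q}$ with $B^{s,L}_{p,q}$, so only this high-frequency bound is needed.

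Next I would interpolate the above against the trivial conservation $\|\psi(\lambda^{-1}\sqrt{L})S_tf\|_{L^2}\lesssim\|f\|_{L^2}$ (immediate from the spectral theorem) by Riesz--Thorin, with interpolation parameter $\vartheta=1-2/p$, obtaining for $2\le p\le\infty$
\[
\big\|\psi(\lambda^{-1}\sqrt{L})S_tf\big\|_{L^p}\lesssim |t|^{-(n-1)\left(\frac12-\frac1p\right)}\lambda^{2\left((1-\nu)n+\nu\right)\left(\frac12-\frac1p\right)}\|f\|_{L^{p'}}.
\]
Then I would set $\lambda=2^j$ with $j>0$, use the almost-orthogonality $\psi_j(\sqrt{L})=\sum_{l=-1}^{1}\psi_j(\sqrt{L})\psi_{j+l}(\sqrt{L})$ to transfer the dyadic weight onto the input, and take the $\ell^2_j$ norm to arrive at exactly the hypothesis \eqref{General-Assumption} of Proposition~\ref{General-Strichartz} with
\[
\theta(p)=(n-1)\left(\tfrac12-\tfrac1p\right),\qquad s(p)=-\left((1-\nu)n+\nu\right)\left(\tfrac12-\tfrac1p\right).
\]
This entire chain is the computation displayed just before the theorem, so I regard it as bookkeeping rather than substantive work.

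Finally I would feed these exponents into Proposition~\ref{General-Strichartz}. The admissibility relation $\tfrac{2}{q_i}+\tfrac{n-1}{p_i}=\tfrac{n-1}{2}$ is literally the identity $q_i=2/\theta(p_i)$, and the prescribed $s_i$ is exactly $s(p_i)$, so the homogeneous estimate $\|U_tf\|_{L^{2/\theta(p_1)}(B^{s(p_1),L}_{p_1,2})}\lesssim\|f\|_{L^2}$ and its retarded counterpart transcribe verbatim into the two asserted inequalities. The only delicate point, and the one I expect to be the genuine obstacle, is the excluded case $(q_i,p_i,n)=(2,\infty,3)$: there $\theta=(n-1)/2=1$ and $q=2/\theta=2$, which is precisely the double-endpoint situation of the Keel--Tao argument underpinning Proposition~\ref{General-Strichartz}. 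As in \cite{KT}, the bilinear interpolation that yields the $q=2$ endpoint fails at $p=\infty$, so this case must be removed; away from it the proposition applies directly and the proof is complete.
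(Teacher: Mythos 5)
Your proposal is correct and follows essentially the same route as the paper: verify that $\phi(r)=r^\nu$ satisfies (H1)--(H4) with $m_1=\alpha_1=m_2=\alpha_2=\nu$, invoke the (H3) branch of Theorem~\ref{main-result} together with Plancherel, interpolate by Riesz--Thorin, convert to the Besov-norm estimate \eqref{General-Assumption} via the dyadic decomposition $\lambda=2^j$ and the almost-orthogonality $\psi_j(\sqrt{L})=\sum_{l=-1}^{1}\psi_j(\sqrt{L})\psi_{j+l}(\sqrt{L})$, and then apply Proposition~\ref{General-Strichartz} with $\theta(p)=(n-1)\left(\tfrac12-\tfrac1p\right)$ and $s(p)=-\left((1-\nu)n+\nu\right)\left(\tfrac12-\tfrac1p\right)$. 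Your additional explanation of why $(q_i,p_i,n)=(2,\infty,3)$ must be excluded (the failed Keel--Tao double endpoint at $\theta=1$) is a point the paper leaves implicit, but it does not change the argument.
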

	
	\subsubsection{The Klein-Gordon equation}
	Moreover, we consider the Klein-Gordon equation
	\begin{equation}\label{K-GEqu}
		\begin{cases}
			\partial_t^2u+L u+u=g,\\
			u(0)=u_0 ,  \\
			\partial_tu(0)=u_1.
		\end{cases}
	\end{equation}
	By Duhamel's principle, the solution is formally given by
	\begin{equation*}\label{solution3}
		u(t)=\frac{dA_t}{dt}u_0+A_tu_1-\int_0^tA_{t-\tau}g(\tau,\cdot)\,d\tau,
	\end{equation*}
	where 
	\begin{equation*}
		A_t=\frac{sin(t\sqrt{I+L})}{\sqrt{I+L}},
		\quad  \frac{dA_t}{dt}=cos(t\sqrt{I+L}).
	\end{equation*}
	So we naturally introduce the operator $K_t=e^{it\sqrt{I+L}}$, which corresponds to the case when $\phi(r)=\sqrt{1+r}$. 
	By a simple calculation,
	\begin{equation*}
		\phi'(r)=(1+r)^{-\frac{1}{2}}, \quad
		\phi''(r)=-\frac{1}{2}(1+r)^{-\frac{3}{2}}.
	\end{equation*}
	We know $\phi$ satisfies (H1)-(H4) with $m_1=\alpha_1=\frac{1}{2}, m_2=1, \alpha_2=2$. 
	
	Argued similarly as the fractional Schr\"odinger equation, for $p\geq2$, we have
	\begin{equation*}
		\|K_tf\|_{B_{p,2}^{s,L}}\lesssim {|t|}^{-(n-1)\left(\frac{1}{2}-\frac{1}{p}\right)}\|f\|_{B_{p',2}^{-s,L}},\, \text{with } s=-\frac{n+1}{2}\left(\frac{1}{2}-\frac{1}{p}\right).
	\end{equation*}
	
	Using Proposition \ref{General-Strichartz}, we obtain the Strichartz estimates for the Klein-Gordon operator $K_t=e^{it\sqrt{I+L}}$.
	\begin{theorem} \label{K-G}
		Assume $n>1$. For $i=1,2$, let $p_i,q_i\geq 2$ and $s_i\in\mathbb{R}$ such that 
		\begin{equation*}
			\frac{2}{q_i}+\frac{n-1}{p_i}=\frac{n-1}{2}\quad \text{and}\quad s_i=-\frac{n+1}{2}\left(\frac{1}{2}-\frac{1}{p_i}\right),
		\end{equation*} 
		except $(q_i,p_i,n)=(2,\infty,3)$. Then the Klein-Gordon operator $K_t=e^{it\sqrt{I+L}}$ satisfies
		\begin{align*}
			\|K_tf\|_{L^{q_1}\left((-T,T),B_{p_1,2}^{s_1,L}\right)}&\lesssim \|f\|_{L^2},\\
			\|\int_0^tK_{t-\tau}g(\tau,\cdot)d\tau\|_{L^{q_1}\left((-T,T),B_{p_1,2}^{s_1,L}\right)}&\lesssim \|g\|_{L^{q'_2}\left((-T,T),B_{p'_2,2}^{-s_2,L}\right)}.
		\end{align*}
		for any $0<T\leq T_0$.
	\end{theorem}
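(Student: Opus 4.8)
The plan is to obtain Theorem~\ref{K-G} as a direct application of the abstract Strichartz machinery in Proposition~\ref{General-Strichartz}, so that the whole task reduces to verifying the fixed-time hypothesis \eqref{General-Assumption} for the Klein--Gordon propagator $K_t=e^{it\sqrt{I+L}}=e^{it\phi(L)}$ with $\phi(r)=\sqrt{1+r}$, and then reading off the correct exponents $\theta(p)$ and $s(p)$.

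First I would check that $\phi(r)=\sqrt{1+r}$ falls under the hypotheses of Theorem~\ref{main-result}. Differentiating gives $\phi'(r)\sim(1+r)^{-1/2}$ and $\phi''(r)\sim(1+r)^{-3/2}$, with all higher derivatives decaying at least as fast, so (H1) and (H3) hold in the high-frequency regime $r\ge1$ with $m_1=\alpha_1=\tfrac12$ (and (H2), (H4) hold in low frequency with $m_2=1,\ \alpha_2=2$, though these will not be needed). Since the operators of interest have strictly positive discrete spectrum, Remark~\ref{Besov-property} lets me discard the low frequencies and work only with $\lambda\ge1$; thus the homogeneous and nonhomogeneous Besov norms coincide and only the high-frequency output of Theorem~\ref{main-result} is relevant.

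Next I would assemble the fixed-time decay. Feeding $m_1=\alpha_1=\tfrac12$ into the (H3)-part of Theorem~\ref{main-result}(1) gives the frequency-localized dispersive bound
\begin{equation*}
\left\|\psi(\lambda^{-1}\sqrt{L})K_tf\right\|_{L^\infty}\lesssim |t|^{-\frac{n-1}{2}}\lambda^{\frac{n+1}{2}}\|f\|_{L^1},\qquad \lambda\ge1,\ |t|<T_0,
\end{equation*}
while unitarity of $K_t$ on $L^2$ together with the $L^2$-boundedness of $\psi(\lambda^{-1}\sqrt L)$ gives the trivial companion bound $\|\psi(\lambda^{-1}\sqrt L)K_tf\|_{L^2}\lesssim\|f\|_{L^2}$. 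Riesz--Thorin interpolation between these two endpoints produces, for every $2\le p\le\infty$, the $L^{p'}\to L^p$ estimate with time decay $|t|^{-(n-1)(\frac12-\frac1p)}$ and frequency weight $\lambda^{(n+1)(\frac12-\frac1p)}$. Setting $\lambda=2^j$ ($j>0$), inserting the almost-orthogonality identity $\psi_j(\sqrt L)=\sum_{l=-1}^{1}\psi_j(\sqrt L)\psi_{j+l}(\sqrt L)$, balancing the frequency weight symmetrically against the dyadic factor $2^{js}$, and summing in $\ell^2$ over $j$ transfers the bound to the Besov scale. This is precisely \eqref{General-Assumption} with
\begin{equation*}
\theta(p)=(n-1)\left(\tfrac12-\tfrac1p\right),\qquad s(p)=-\frac{n+1}{2}\left(\tfrac12-\tfrac1p\right),
\end{equation*}
mirroring the fractional Schr\"odinger computation carried out just above in the text.

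Finally I would invoke Proposition~\ref{General-Strichartz} with $p_1,p_2\ge2$. The homogeneous admissible exponent it produces is $q_i=2/\theta(p_i)$, and the relation stated in the theorem, $\tfrac{2}{q_i}+\tfrac{n-1}{p_i}=\tfrac{n-1}{2}$, is exactly the identity $\tfrac{2}{q_i}=(n-1)(\tfrac12-\tfrac1{p_i})=\theta(p_i)$; the constraint $q_i\ge2$ then forces $\theta(p_i)\le1$, which is the admissibility required by the proposition, and $n>1$ guarantees $\theta(p_i)>0$ for $p_i>2$. The scaling index $s_i=s(p_i)$ coincides with the prescribed $s_i=-\frac{n+1}{2}(\tfrac12-\tfrac1{p_i})$, so both the homogeneous and the inhomogeneous (Duhamel) estimates follow at once. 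The only genuinely delicate point is the excluded endpoint $(q_i,p_i,n)=(2,\infty,3)$, the Keel--Tao endpoint, where the duality/$TT^*$ argument underlying Proposition~\ref{General-Strichartz} breaks down; I would simply exclude it as in \cite{KT}. The substantive analytic difficulty has already been absorbed into Theorem~\ref{main-result} (ultimately the subordination formula of Theorem~\ref{High-frequecy} and van der Corput's lemma), so beyond that the argument is routine interpolation and summation.
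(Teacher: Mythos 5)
Your proposal is correct and follows essentially the same route as the paper: verify (H1)--(H4) for $\phi(r)=\sqrt{1+r}$ with $m_1=\alpha_1=\tfrac12$, invoke the (H3)-part of Theorem~\ref{main-result} to get the $|t|^{-\frac{n-1}{2}}\lambda^{\frac{n+1}{2}}$ frequency-localized dispersive bound, interpolate with the $L^2$ bound, sum dyadically into the Besov scale, and apply Proposition~\ref{General-Strichartz}. The paper compresses these steps into ``argued similarly as the fractional Schr\"odinger equation,'' and your write-up simply makes that reduction explicit with the same exponents $\theta(p)=(n-1)\left(\tfrac12-\tfrac1p\right)$ and $s(p)=-\frac{n+1}{2}\left(\tfrac12-\tfrac1p\right)$.
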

	\subsubsection{The beam equation}
	Besides, we consider the beam equation
	\begin{equation}\label{BeamEqu}
		\begin{cases}
			\partial_t^2u+L^2 u+u=g,\\
			u(0)=u_0 ,  \\
			\partial_tu(0)=u_1.
		\end{cases}
	\end{equation}
	By Duhamel's principle, the solution is formally given by
	\begin{equation*}\label{solution3}
		u(t)=\frac{d\mathcal{B}_t}{dt}u_0+\mathcal{B}_tu_1-\int_0^t\mathcal{B}_{t-\tau}g(\tau,\cdot)\,d\tau,
	\end{equation*}
	where 
	\begin{equation*}
		\mathcal{B}_t=\frac{sin(t\sqrt{I+L^2})}{\sqrt{I+L^2}},
		\quad  \frac{d\mathcal{B}_t}{dt}=cos(t\sqrt{I+L^2}).
	\end{equation*}
	So we naturally introduce the operator $B_t=e^{it\sqrt{I+L^2}}$, which corresponds to the case when $\phi(r)=\sqrt{1+r^2}$. 
	By a simple calculation,
	\begin{equation*}
		\phi'(r)=r(1+r^2)^{-\frac{1}{2}}, \quad
		\phi''(r)=(1+r^2)^{-\frac{3}{2}}.
	\end{equation*}
	We know $\phi$ satisfies (H1)–(H4) with $m_1=1,\alpha_1=-1, m_2=\alpha_2=2$. 
	
	By Theorem \ref{main-result} and Plancherel formula, for $\lambda\geq1, |t|<T_0$, we have
	\begin{align*} \left\|\psi(\lambda^{-1}\sqrt{L}){B}_tf\right\|_{L^\infty}&\lesssim{|t|}^{-\frac{n-1}{2}}\lambda^{3}\left\|f\right\|_{L^1},\\
		\left\|\psi(\lambda^{-1}\sqrt{L}){B}_tf\right\|_{L^2}&\lesssim\left\|f\right\|_{L^2}. 
	\end{align*}
	Taking $\lambda=2^j,\forall j>0$, using Riesz-Thorin interpolation and noting $\psi_j(\sqrt{L})=\sum\limits_{l=-1}^1\psi_j(\sqrt{L})\psi_{j+l}(\sqrt{L})$, we obtain for $p\geq2$ that
	\begin{equation*}
		\|{B}_tf\|_{B_{p,2}^{s,L}}\lesssim {|t|}^{-(n-1)\left(\frac{1}{2}-\frac{1}{p}\right)}\|f\|_{B_{p',2}^{-s,L}},\, \text{with } s=-3\left(\frac{1}{2}-\frac{1}{p}\right).
	\end{equation*}
	
	Using Proposition \ref{General-Strichartz}, we obtain the Strichartz estimates for the beam operator $B_t=e^{it\sqrt{I+L^2}}$.
	\begin{theorem}\label{beam} Assume $n>1$. For $i=1,2$, let $p_i,q_i\geq 2$ and $s_i\in\mathbb{R}$ such that 
		\begin{equation*}
			\frac{2}{q_i}+\frac{n-1}{p_i}=\frac{n-1}{2}\quad \text{and}\quad s_i=-3\left(\frac{1}{2}-\frac{1}{p_i}\right),
		\end{equation*} 
		except $(q_i,p_i,n)=(2,\infty,3)$. Then the beam operator $B_t=e^{it\sqrt{I+L^2}}$ satisfies
		\begin{align*}
			\|B_tf\|_{L^{q_1}\left((-T,T),B_{p_1,2}^{s_1,L}\right)}&\lesssim \|f\|_{L^2},\\
			\|\int_0^tB_{t-\tau}g(\tau,\cdot)d\tau\|_{L^{q_1}\left((-T,T),B_{p_1,2}^{s_1,L}\right)}&\lesssim \|g\|_{L^{q'_2}\left((-T,T),B_{p'_2,2}^{-s_2,L}\right)}.
		\end{align*}
		for any $0<T\leq T_0$.
	\end{theorem}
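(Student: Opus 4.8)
The plan is to obtain Theorem~\ref{beam} as a direct application of the abstract Strichartz machinery in Proposition~\ref{General-Strichartz}, once the fixed-frequency dispersive bound coming from Theorem~\ref{main-result} has been upgraded to the Besov-space decay estimate \eqref{General-Assumption}. Since the spectrum of $L$ is strictly positive and discrete, Remark~\ref{Besov-property} lets me work only in the high-frequency regime $\lambda\ge1$ and identify the homogeneous and non-homogeneous Besov norms, so only part~(1) of the preceding results is needed.

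First I would record the symbol computation. For $\phi(r)=\sqrt{1+r^2}$ one has $\phi'(r)=r(1+r^2)^{-1/2}$ and $\phi''(r)=(1+r^2)^{-3/2}$, so that $|\phi'(r)|\sim r^{0}=r^{m_1-1}$ and $|\phi''(r)|\sim r^{-3}=r^{\alpha_1-2}$ for $r\ge1$, which verifies (H1) and (H3) with $m_1=1$ and $\alpha_1=-1$; in particular $\alpha_1\le m_1$, the inequality used to control the remainder $\rho_t$ in Theorem~\ref{high-frequency-result}. Substituting $m_1=1$, $\alpha_1=-1$ into the second estimate of Theorem~\ref{main-result}(1) gives the exponent $(1-m_1)n+2m_1-\alpha_1=3$, i.e.
\begin{equation*}
\bigl\|\psi(\lambda^{-1}\sqrt{L})B_tf\bigr\|_{L^\infty}\lesssim|t|^{-\frac{n-1}{2}}\lambda^{3}\|f\|_{L^1},\qquad \lambda\ge1,\ |t|<T_0.
\end{equation*}

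Next I would combine this with the trivial bound $\|\psi(\lambda^{-1}\sqrt{L})B_tf\|_{L^2}\lesssim\|f\|_{L^2}$, which follows from the spectral theorem since $B_t$ is unitary and $\psi$ is bounded. Riesz--Thorin interpolation with parameter $\theta=2(\tfrac12-\tfrac1p)$ then yields, for $2\le p\le\infty$,
\begin{equation*}
\bigl\|\psi(\lambda^{-1}\sqrt{L})B_tf\bigr\|_{L^p}\lesssim|t|^{-(n-1)(\frac12-\frac1p)}\lambda^{6(\frac12-\frac1p)}\|f\|_{L^{p'}}.
\end{equation*}
Setting $\lambda=2^{j}$, inserting the almost-orthogonality identity $\psi_j(\sqrt{L})=\sum_{l=-1}^{1}\psi_j(\sqrt{L})\psi_{j+l}(\sqrt{L})$ (commuting $B_t$ with the spectral projections), and summing the dyadic pieces in the $\ell^2$-based Besov norm distributes the power $\lambda^{6(\frac12-\frac1p)}$ symmetrically between the two factors, producing \eqref{General-Assumption} with
\begin{equation*}
\theta(p)=(n-1)\Bigl(\tfrac12-\tfrac1p\Bigr),\qquad s(p)=-3\Bigl(\tfrac12-\tfrac1p\Bigr).
\end{equation*}

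Finally I would check that the hypotheses of Theorem~\ref{beam} are precisely the translation of those of Proposition~\ref{General-Strichartz}: the relation $\tfrac{2}{q_i}+\tfrac{n-1}{p_i}=\tfrac{n-1}{2}$ is equivalent to $q_i=2/\theta(p_i)$, so the assumption $q_i\ge2$ is exactly the admissibility constraint $0<\theta(p_i)\le1$ demanded by \eqref{General-Assumption}, while $s_i=s(p_i)$ matches the Besov indices on both sides; hence the proposition applies verbatim and delivers both the homogeneous and the inhomogeneous (retarded) estimates. The only delicate point is the endpoint $\theta=1$, i.e.\ $q=2$: the duality/$TT^*$ argument behind Proposition~\ref{General-Strichartz} degenerates exactly at $p=\infty$, which under the admissibility relation forces $n=3$, and this is the single excluded triple $(q_i,p_i,n)=(2,\infty,3)$. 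The genuine difficulty thus lies entirely upstream, not in this theorem: the non-homogeneity of $\phi(r)=\sqrt{1+r^2}$ defeats the naive scaling/Littman approach, and it is precisely this obstruction that the subordination formula of Theorem~\ref{High-frequecy}, and thereby Theorem~\ref{main-result}, is designed to overcome; granting those, the present proof reduces to routine bookkeeping of exponents.
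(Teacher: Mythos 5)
Your proposal is correct and follows essentially the same route as the paper: verify (H1)/(H3) with $m_1=1$, $\alpha_1=-1$, apply Theorem \ref{main-result}(1) to get the frequency-localized $L^1\to L^\infty$ bound with $\lambda^3$, interpolate with the trivial $L^2$ bound, sum dyadically via the almost-orthogonality identity to obtain the Besov decay estimate \eqref{General-Assumption} with $\theta(p)=(n-1)(\tfrac12-\tfrac1p)$ and $s(p)=-3(\tfrac12-\tfrac1p)$, and then invoke Proposition \ref{General-Strichartz}. Your added remarks (why $\alpha_1\le m_1$ is needed, and why the excluded triple $(2,\infty,3)$ is the Keel--Tao endpoint) are consistent elaborations of steps the paper leaves implicit.
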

	
	\subsubsection{The fourth-order Schr\"{o}dinger equation}
	Finally, we consider the fourth-order Schr\"{o}dinger equation
	\begin{equation}\label{SSchrEqu}
		\begin{cases}
			i\partial_tu+L^2u+Lu=g,\\
			u(0)=u_0.
		\end{cases}
	\end{equation}
	By Duhamel's principle, the solution is formally given by
	\begin{equation*}\label{solution3}
		u(t)=\mathcal{U}_tu_0-i\int_0^t\mathcal{U}_{t-\tau}g(\tau,\cdot)\,d\tau,
	\end{equation*}
	where $\mathcal{U}_t=e^{it(L^2+L)}$ and it corresponds to the case when $\phi(r)=r^2+r$. By a simple calculation,
	\begin{equation*}
		\phi'(r)=2r+1, \quad
		\phi''(r)=2.
	\end{equation*}
	We know $\phi$ satisfies (H1)-(H4) with $m_1=\alpha_1=\alpha_2=2$, $m_2=1$. 
	
	Argued similarly as the fractional Schr\"odinger equation, for $p\geq2$, we have
	\begin{equation*}
		\|\mathcal{U}_tf\|_{B_{p,2}^{s,L}}\lesssim {|t|}^{-(n-1)\left(\frac{1}{2}-\frac{1}{p}\right)}\|f\|_{B_{p',2}^{-s,L}},\, \text{with } s=-(2-n)\left(\frac{1}{2}-\frac{1}{p}\right).
	\end{equation*}
	
	Using Proposition \ref{General-Strichartz}, we obtain the Strichartz estimates for the fourth-order Schr\"odinger operator $\mathcal{U}_t=e^{it(L^2+L)}$.
	\begin{theorem} \label{fourth}
		Assume $n>1$. For $i=1,2$, let $p_i,q_i\geq 2$ and $s_i\in\mathbb{R}$ such that 
		\begin{equation*}
			\frac{2}{q_i}+\frac{n-1}{p_i}=\frac{n-1}{2}\quad \text{and}\quad s_i=-(2-n)\left(\frac{1}{2}-\frac{1}{p_i}\right),
		\end{equation*} 
		except $(q_i,p_i,n)=(2,\infty,3)$. Then the fourth-order Schr\"odinger operator $\mathcal{U}_t=e^{it(L^2+L)}$ satisfies
		\begin{align*}
			\|\mathcal{U}_tf\|_{L^{q_1}\left((-T,T),B_{p_1,2}^{s_1,L}\right)}&\lesssim \|f\|_{L^2},\\
			\|\int_0^t\mathcal{U}_{t-\tau}g(\tau,\cdot)d\tau\|_{L^{q_1}\left((-T,T),B_{p_1,2}^{s_1,L}\right)}&\lesssim \|g\|_{L^{q'_2}\left((-T,T),B_{p'_2,2}^{-s_2,L}\right)}.
		\end{align*}
		for any $0<T\leq T_0$.
	\end{theorem}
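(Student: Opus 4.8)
The plan is to follow verbatim the route laid out for the fractional Schr\"odinger equation, reducing the whole statement to the high-frequency half of Theorem \ref{main-result} together with the abstract Proposition \ref{General-Strichartz}.

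First I would check that $\phi(r)=r^2+r$ satisfies (H1)--(H4). From $\phi'(r)=2r+1$ and $\phi''(r)=2$ one reads off $|\phi'(r)|\sim r$, $|\phi''(r)|\sim1$ for $r\geq1$ and $|\phi'(r)|\sim1$, $|\phi''(r)|\sim1$ for $0<r<1$, while $\phi^{(\alpha)}\equiv0$ for $\alpha\geq3$; hence (H1)--(H4) hold with $m_1=\alpha_1=\alpha_2=2$ and $m_2=1$, and in particular $\alpha_1\leq m_1$ as required in the proof of Theorem \ref{high-frequency-result}. Since the operators of interest (Hermite, twisted Laplacian, Laguerre) have strictly positive discrete spectrum, Remark \ref{Besov-property} permits me to identify $\dot B^{s,L}_{p,q}$ with $B^{s,L}_{p,q}$ and to restrict the Littlewood--Paley sum to $\lambda=2^j\geq2$. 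This is convenient here: for $\mathcal U_t$ the low-frequency orders $m_2=1$ and $\alpha_2=2$ do not coincide, so only the high-frequency branch of Theorem \ref{main-result} is ever used.

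Next I would invoke the sharper high-frequency estimate in Theorem \ref{main-result}(1) (the line requiring (H1) and (H3)), whose exponent specializes to $(1-m_1)n+2m_1-\alpha_1=2-n$, giving for $\lambda\geq1$ and $|t|<T_0$
\begin{equation*}
\left\|\psi(\lambda^{-1}\sqrt{L})\mathcal U_t f\right\|_{L^\infty}\lesssim |t|^{-\frac{n-1}{2}}\lambda^{2-n}\|f\|_{L^1},
\end{equation*}
paired with the spectral bound $\|\psi(\lambda^{-1}\sqrt{L})\mathcal U_t f\|_{L^2}\lesssim\|f\|_{L^2}$. Riesz--Thorin interpolation of these two endpoints yields, for $2\leq p\leq\infty$,
\begin{equation*}
\left\|\psi(\lambda^{-1}\sqrt{L})\mathcal U_t f\right\|_{L^p}\lesssim |t|^{-(n-1)\left(\frac12-\frac1p\right)}\lambda^{2(2-n)\left(\frac12-\frac1p\right)}\|f\|_{L^{p'}}.
\end{equation*}

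Finally I would set $\lambda=2^j$, insert the reproducing identity $\psi_j(\sqrt L)=\sum_{l=-1}^1\psi_j(\sqrt L)\psi_{j+l}(\sqrt L)$ to frequency-localize the right-hand side, and split the power $\lambda^{2(2-n)(1/2-1/p)}$ so that one half combines with the Besov weight on the left and one half matches the weight on the right; since the shift $l\in\{-1,0,1\}$ costs only bounded factors, taking the $\ell^2$ norm in $j$ produces the Besov dispersive estimate
\begin{equation*}
\|\mathcal U_t f\|_{B^{s,L}_{p,2}}\lesssim |t|^{-(n-1)\left(\frac12-\frac1p\right)}\|f\|_{B^{-s,L}_{p',2}},\qquad s=-(2-n)\left(\tfrac12-\tfrac1p\right).
\end{equation*}
This is exactly \eqref{General-Assumption} with $\theta(p)=(n-1)(\tfrac12-\tfrac1p)$ and $s(p)=-(2-n)(\tfrac12-\tfrac1p)$; feeding these into Proposition \ref{General-Strichartz} and noting that the admissibility relation $\tfrac{2}{q_i}+\tfrac{n-1}{p_i}=\tfrac{n-1}{2}$ is precisely $\tfrac{2}{q_i}=\theta(p_i)$ while $s_i=s(p_i)$, the two stated Strichartz inequalities follow immediately. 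The only genuinely delicate point is the excluded triple $(q_i,p_i,n)=(2,\infty,3)$, which is the $\theta=1$ Keel--Tao forbidden endpoint where the duality argument behind Proposition \ref{General-Strichartz} breaks down; away from it the theorem is a direct specialization of the machinery already established, so no essentially new difficulty arises.
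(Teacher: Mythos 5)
Your proposal is correct and takes essentially the same approach as the paper: verify (H1)--(H4) with $m_1=\alpha_1=\alpha_2=2$, $m_2=1$, apply the high-frequency estimate of Theorem \ref{main-result} (the (H1)+(H3) branch, giving the exponent $(1-m_1)n+2m_1-\alpha_1=2-n$), interpolate with the $L^2$ bound, sum dyadically to obtain the Besov dispersive estimate \eqref{General-Assumption} with $\theta(p)=(n-1)\left(\tfrac12-\tfrac1p\right)$ and $s(p)=-(2-n)\left(\tfrac12-\tfrac1p\right)$, and conclude via Proposition \ref{General-Strichartz}. You merely make explicit the steps the paper compresses into ``argued similarly as the fractional Schr\"odinger equation,'' including the correct observation that only the high-frequency branch is needed because the spectrum is discrete and bounded away from zero.
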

	
	\subsection{The Hermite operator}
	Let $H=-\Delta +|x|^2$ be the Hermite operator on $\mathbb{R}^n, \,n\geq1$. For $k\in\mathbb{N}$, let $H_k$ denote the Hermite polynomial on $\mathbb{R}$ defined by
	\begin{equation*}
		H_k(x)=(-1)^k\frac{d^k}{dx^k}(e^{-x^2})e^{x^2},
	\end{equation*}
	and $h_k$ denote the normalized Hermite function on $\mathbb{R}$ defined by
	\begin{equation*}
		h_k(x)=(2^k\sqrt{\pi}k!)^{-\frac{1}{2}}H_k(x)e^{-\frac{1}{2}x^2}.
	\end{equation*}
	For any multi-index $\beta\in\mathbb{N}^n$ and $x=(x_1,x_2,\cdots,x_n)\in\mathbb{R}^n$, We set $\Phi_\beta(x)=\prod\limits_{k=1}^nh_{\beta_k}(x_k)$. It is well-known that the family $\{\Phi_\beta\}_{\beta\in\mathbb{N}^n}$ forms a complete orthonormal basis for $L^2(\mathbb{R}^n)$ and they are eigenfunctions of the Hermite operator $H$ corresponding to eigenvalues $2|\beta|+n$, where $|\beta|=\sum\limits_{k=1}^n\beta_k.$ Then the Hermite operator is a strictly positive self-adjoint operator on $L^2(\mathbb{R}^n)$. For every $f\in L^2(\mathbb{R}^n)$, it has the expansion
	\begin{equation}\label{twisted-expansion}
		f=\sum_{\beta\in\mathbb{N}^n}\langle f,\Phi_{\beta}\rangle \Phi_{\beta}=\sum_{k=1}^\infty P_kf,
	\end{equation}
	where 
	\begin{equation*}
		P_kf=\sum_{|\beta|=k}\langle f,\Phi_\beta\rangle \Phi_\beta
	\end{equation*}
	denotes the orthogonal projection of $L^2(\mathbb{R}^n)$ onto the eigenspace spanned by $\{\Phi_\beta: |\beta|=k\}$.
	
	The semigroup $\{e^{-tH}: t>0\}$ associated to the Hermite operator $H$ is defined by
	\begin{equation*}
		e^{-tH}f=\sum_{k=0}^\infty e^{-t(2k+n)}P_kf,
	\end{equation*}
	for $f\in L^2(\mathbb{R}^n)$ and the above expression can be written as 
	\begin{equation*}
		e^{-tH}f=\int_{\mathbb{R}^n}p_t(x,y)f(y)dy,
	\end{equation*}
	where the kernel $p_t(x,y)$ is given by the expansion
	\begin{equation*}
		p_t(x,y)=\sum_{k=0}^\infty e^{-t(2k+n)}\sum_{|\beta|=k}\Phi_\beta(x)\Phi_\beta(y)=\sum_{\beta\in\mathbb{N}^n}e^{-t(2|\beta|+n)}\Phi_\beta(x)\Phi_\beta(y).
	\end{equation*}
	Using Mehler's formula for Hermite functions, the kernel $p_t(x,y)$ can be expressed explicitly by 
	\begin{align*}
		p_t(x,y)&=\left(2\pi \sinh 2t\right)^{-\frac{n}{2}}\exp \left(-\frac{1}{2}\left(|x|^2+y|^2\right)\coth 2t+\frac{x\cdot y}{\sinh 2t}\right)\\
		&=\left(2\pi \sinh 2t\right)^{-\frac{n}{2}}\exp \left(-\frac{1}{4}\frac{1+e^{-2t}}{1-e^{-2t}}|x-y|^2-\frac{1}{4}\frac{1-e^{-2t}}{1+e^{-2t}}|x+y|^2\right)
	\end{align*}
	for all $t>0$ and $x,y\in\mathbb{R}^n$. It is easy to see that
	\begin{equation*}
		0< p_t(x,y)\leq \left(4\pi t\right)^{-\frac{n}{2}}\exp \left(-\frac{|x-y|^2}{4t}\right),
	\end{equation*}
	which indicates that $p_t(x,y)$ enjoys the Gaussion upper bound (A2).
	
	Moreover, the kernel of the Schr\"odinger operator $e^{-itH}$ can be written as 
	\begin{equation*}
		p_{it}(x,y)=\left(2\pi i \sin 2t\right)^{-\frac{n}{2}}\exp \left(\frac{i}{2}\left(|x|^2+|y|^2\right)\cot 2t-\frac{ix\cdot y}{\sin 2t}\right)
	\end{equation*}
	and 
	\begin{equation*}
		|p_{it}(x,y)|\leq\left(2\pi |\sin 2t|\right)^{-\frac{n}{2}}\lesssim |t|^{-\frac{n}{2}},\,\forall |t|\leq\frac{\pi}{4},
	\end{equation*}
	which yields that
	\begin{equation*}
		\|e^{itH}f\|_{L^1\to L^\infty}\lesssim |t|^{-\frac{n}{2}},\,\forall |t|\leq\frac{\pi}{4}.
	\end{equation*}
	We refer to \cite{NR2005, Thangavelu} for a detailed study on Hermite functions and the kernels associated to $e^{-tH}$ and $e^{-itH}$.
	Therefore, the Hermite operator $H$ satisfies the conditions (A1) and (A2). The estimates in Theorem \ref{fractional}, \ref{K-G}, \ref{beam} and \ref{fourth} hold true for the Hermite operator when $n\geq2$.
	\subsection{The twisted Laplacian}
	The twisted Laplacian (or the special Hermite operator), introduced by Strichartz \cite{Strichartz} is defined on $\mathbb{C}^d=\mathbb{R}^{n}$ ($n=2d$) by
	\begin{equation*}
		L=\frac{1}{2}\sum\limits_{k=1}^d(Z_k\overline{Z}_k+\overline{Z}_kZ_k),\quad \text{where} \quad Z_k=\frac{\partial}{\partial {z_k}}+\frac{\overline{z}_k}{2}, \,\overline{Z}_k=-\frac{\partial}{\partial\overline{z}_k}+\frac{{z}_k}{2},\,k=1,2,\cdots,d.
	\end{equation*}
	Here $\frac{\partial}{\partial {z_k}}$ and $\frac{\partial}{\partial\overline{z}_k}$ denote the complex derivatives $\frac{\partial}{\partial {x_k}}\mp i\frac{\partial}{\partial {y_k}}$, respectively. In explicit terms it has the form 
	\begin{equation*}
		L=-\Delta_z+\frac{1}{4}|z|^2-i\sum_{k=1}^d\left(x_k\frac{\partial}{\partial {y_k}}-y_k\frac{\partial}{\partial {x_k}}\right),
	\end{equation*}
	where $\Delta_z$ is the Laplacian on $\mathbb{C}^d$. For each multi-index $\mu,\nu\in\mathbb{N}^d$, we define the special Hermite functions on $\mathbb{C}^d$ by
	\begin{equation*}
		\Phi_{\mu,\nu}(z)=(2\pi)^{-\frac{d}{2}}\int_{\mathbb{R}^d}e^{ix\cdot\xi}\Phi_\mu\left(\xi+\frac{y}{2}\right)\Phi_\nu\left(\xi-\frac{y}{2}\right)d\xi,\,z=x+iy\in\mathbb{C}^d.
	\end{equation*}
	The family $\{\Phi_{\mu,\nu}: \mu,\nu\in\mathbb{N}^d\}$ forms a complete orthonormal basis for $L^2(\mathbb{C}^d)$ and they are eigenfunctions of the twisted Laplacian corresponding to eigenvalues $2|\nu|+d$. Then the twisted Laplacian is also a positive self-adjoint operator on $L^2(\mathbb{C}^d)$. For every $f\in L^2(\mathbb{C}^d)$, it has the expansion
	\begin{equation}\label{twisted-expansion}
		f=\sum_{\mu,\nu\in\mathbb{N}^d}\langle f,\Phi_{\mu,\nu}\rangle \Phi_{\mu,\nu}=\sum_{k=1}^\infty P_kf
	\end{equation}
	where
	\begin{equation*}
		P_kf=\sum_{\mu\in\mathbb{N}^d,|\nu|=k}\langle f,\Phi_{\mu,\nu}\rangle \Phi_{\mu,\nu}
	\end{equation*}
	is the spectral projection corresponding to the eigenvalue $2k+d$.
	
	The twisted convolution of two functions $f$ and $g$ on $\mathbb{C}^d$ is defined by
	\begin{equation*}
		f\times g(z)=\int_{\mathbb{C}^d} f(w)g(z-w)e^{-\frac{i}{2}Im (z\cdot\overline{w})} dw.
	\end{equation*}
	The special Hermite functions satisfy the following orthogonality properties
	\begin{equation}\label{orthogonality}
		\Phi_{\mu,\nu}\times \Phi_{\mu',\nu'}=(2\pi)^\frac{n}{2} \delta_{\nu,\mu'}\Phi_{\mu,\nu'},
	\end{equation}
	where $\delta_{\nu,\mu'}=1$ if $\nu=\mu'$ and $0$ otherwise. Let $\varphi_k^{(d-1)}(z)=L_k^{(d-1)}\left(\frac{|z|^2}{2}\right)e^{-\frac{|z|^2}{4}}$ be the Laguerre function of order $d-1$, where $L_k^{(\alpha)}$ denotes the Laguerre polynomial of degree $k$ and order $\alpha>-1$, defined on $\mathbb{R}$ by the generating identity (see \cite{Lebedev})
	\begin{equation}\label{generating}
		\sum_{k=1}^\infty L_k^{(\alpha)}(x)\omega^k=(1-\omega)^{-\alpha-1}e^{-\frac{\omega x}{1-\omega}}, \,|\omega|<1.
	\end{equation}
	The special Hermite functions $\Phi_{\mu,\nu}$ are related to the Laguerre functions $\varphi_k^{(d-1)}$ by the following relation (see \cite{Thangavelu})
	\begin{equation}\label{relationship}
		\varphi_k^{(d-1)}=(2\pi)^\frac{d}{2}\sum_{|\nu|=k}\Phi_{\nu,\nu}.
	\end{equation}
	Taking twisted convolution on both sides of \eqref{twisted-expansion} with $\Phi_{\nu,\nu}$ and using the orthonogality property \eqref{orthogonality}, we have
	\begin{equation*}
		f\times \Phi_{\nu,\nu}=(2\pi)^\frac{d}{2}\sum_{\mu\in\mathbb{N}^d}\langle f,\Phi_{\mu,\nu}\rangle \Phi_{\mu,\nu}.
	\end{equation*}
	Summing both sides of the above equation with respect to all $|\nu|=k$ and by \eqref{relationship}, we get
	\begin{equation*}
		P_kf=(2\pi)^{-\frac{d}{2}}\sum_{|\nu|=k}f\times \Phi_{\nu,\nu}=(2\pi)^{-d}f\times\varphi_k^{(d-1)}.
	\end{equation*}
	Hence, the special Hermite expansion takes the compact form
	\begin{equation*}
		f=(2\pi)^{-d}\sum_{k=0}^\infty f\times\varphi_k^{(d-1)}.
	\end{equation*}
	
	The semigroup $\{e^{-tL}: t>0\}$ associated to the twisted Laplacian $L$ is defined by
	\begin{equation*}
		e^{-tL}f=\sum_{k=0}^\infty e^{-t(2k+d)}P_kf=(2\pi)^{-d}\sum_{k=0}^\infty e^{-t(2k+d)}f\times\varphi_k^{(d-1)},
	\end{equation*}
	for $f\in L^2(\mathbb{C}^d)$. The above expression can be expressed as a twisted convolution 
	\begin{equation*}
		e^{-tL}f(z)=f\times K_t(z)=\int_{\mathbb{C}^d} f(w)K_t(z-w)e^{-\frac{i}{2}Im (z\cdot\overline{w})} dw=\int_{\mathbb{C}^d} p_t(z,w)f(w)dw,
	\end{equation*}
	where $K_t$ and the kernel $p_t(z,w)$ are defined respectively by
	\begin{equation*}
		K_t(z)=(2\pi)^{-d}\sum_{k=0}^\infty e^{-t(2k+d)}\varphi_k^{(d-1)}(z),\quad p_t(z,w)=K_t(z-w)e^{-\frac{i}{2}Im (z\cdot\overline{w})}.
	\end{equation*}
	By the generating identity \eqref{generating}, we have
	\begin{equation*}
		K_t(z)=(4\pi \sinh t)^{-d} \exp\left(-\frac{
			|z|^2
		}{4}\coth t\right),
	\end{equation*}
	which indicates that the kernel $p_t(z,w)$ satisfies the Gaussian upper bound (A2):
	\begin{equation*}
		|p_t(z,w)|=|K_t(z-w)|\leq \left(4\pi t\right)^{-d}\exp \left(-\frac{|x-y|^2}{4t}\right).
	\end{equation*}
	
	Moreover, the kernel of the Schr\"odinger operator $e^{-itL}$ can be written as 
	\begin{equation*}
		p_{it}(z,w)=(4\pi i\sin t)^{-d} \exp\left(i\frac{
			|z-w|^2
		}{4}\cot t\right),
	\end{equation*}
	and 
	\begin{equation*}
		|p_{it}(z,w)|\lesssim|\sin t|^{-d}\lesssim |t|^{-d},\,\forall |t|\leq\frac{\pi}{2},
	\end{equation*}
	which yields that
	\begin{equation*}
		\|e^{itL}f\|_{L^1\to L^\infty}\lesssim |t|^{-\frac{n}{2}},\,\forall |t|\leq\frac{\pi}{2},\text{ where } n=2d.
	\end{equation*}
	We refer to \cite{R2008, Thangavelu} for a detailed study on the special Hermite functions and the kernels associated to $e^{-tL}$ and $e^{-itL}$.
	Therefore, the Hermite operator $L$ satisfies the conditions (A1) and (A2). The estimates in Theorem \ref{fractional}, \ref{K-G}, \ref{beam} and \ref{fourth} hold true for the twisted Laplacian when $d\geq1$.
	
	\subsection{The Laguerre operator}
	For multi-index $\alpha\in(-1,\infty)^d$, consider the Laguerre operator $L_\alpha$ defined on $\mathbb{R}_+^d=(0,\infty)^d$ by
	\begin{equation*}
		L_{\alpha}=-\Delta-\sum_{k=1}^{d}\frac{2\alpha_k+1}{x_k}\dfrac{\partial}{\partial x_k}+\dfrac{|x|^2}{4}.
	\end{equation*}
	The space $\mathbb{R}_+^d=(0,\infty)^d$ is equipped with the Euclidean distance $d$ and measure $\mu_\alpha$ given by
	$d\mu_\alpha(x)=x_1^{2\alpha_1+1}x_2^{2\alpha_2+1}\dots x_n^{2\alpha_n+1}dx_1dx_2\dots dx_n=x^{2\alpha+1}dx$.
	For any $x\in \mathbb{R}_+^d$ and $r>0$, we denote by
	\begin{equation*}
		B(x,r)=\{y\in \mathbb{R}_+^d: |x-y|<r\}
	\end{equation*}
	the ball centered in $x$ with radius $r$. It is easy to see that
	\begin{equation*}
		\mu_\alpha(B(x,r))\sim \prod_{k=1}^d r(r+x_k)^{2\alpha_k+1},
	\end{equation*}
	which follows that the measure $\mu_\alpha$ satisfies the doubling condition \eqref{doubling-condition}. Moreover, if $\alpha\in(-\frac{1}{2},\infty)^d$, we have for all $x\in \mathbb{R}_+^d$ and $r>0$
	\begin{equation*}
		\mu_\alpha(B(x,r))\gtrsim r^n, \quad\text{where}\quad n=2\left(d+\sum\limits_{k=1}^d\alpha_k\right).
	\end{equation*}
	Therefore, throughout this section we always assume that $\alpha\in(-\frac{1}{2},\infty)^d$.
	For any multi-index $\beta\in\mathbb{N}^d$, define the Laguerre functions on $\mathbb{R}_+^d=(0,\infty)^d$ by
	\begin{equation*}
		\psi_\beta^{(\alpha)}(x)=\prod_{k=1}^{d}\psi_{\beta_k}^{(\alpha_j)}(x_k),\, \forall x\in{\mathbb{R}_+^n},
	\end{equation*}
	where the Laguerre function $\psi_{\beta_k}^{(\alpha_j)}(x_k)$ on $\mathbb{R_+}$ is given by
	\begin{equation*}
		\psi_{\beta_k}^{(\alpha_j)}(x_k)={\left(\frac{2^{-\alpha_k}\Gamma(k+1)}{\Gamma(k+\alpha_k+1)}\right)}^{\frac{1}{2}}L_k^{(\alpha_k)}\left(\frac{x_k^2}{2}\right)e^{-\frac{x_k^2}{4}}.
	\end{equation*}
	The family $\{\psi_\beta^{(\alpha)}: \beta\in\mathbb{N}^d\}$ forms a complete orthonormal basis for $L^2(\mathbb{R}_+^n,d\mu_\alpha)$ and they are eigenfunctions of the Laguerre operator $L_\alpha$ corresponding to eigenvalues $2|\beta|+\sum\limits_{k=1}^{d}\alpha_j+d>\frac{d}{2}$.
	Then the Laguerre operator is a strictly positive self-adjoint operator on $L^2(\mathbb{R}_+^n,d\mu_\alpha)$. For any $f\in L^2(\mathbb{R}_+^n,d\mu_\alpha)$, it has the Laguerre expansion 
	\begin{equation*}
		f=\sum_{\beta\in\mathbb{N}^d}\langle f,\psi_\beta^{(\alpha)}\rangle_\alpha\psi_\beta^{(\alpha)}=\sum_{k=0}^\infty P_k f
	\end{equation*}
	where $\langle\cdot,\cdot\rangle_\alpha$ is an inner product inherited from $L^2(\mathbb{R}^n_+,d\mu_\alpha)$ and
	\begin{equation*}
		P_k f=\sum_{|\beta|=k}\langle f,\psi_\beta^{(\alpha)}\rangle_\alpha\psi_\beta^{(\alpha)} 
	\end{equation*}
	denotes the Laguerre projection operator corresponding to the eigenvalue $2k+\sum\limits_{k=1}^{d}\alpha_j+d$.

	The semigroup $\{e^{-tL_\alpha}: t>0\}$ associated to the Laguerre operator $L_\alpha$ is defined by
	\begin{equation*}
		e^{-tL_\alpha}f=\sum_{k=0}^\infty e^{-t(2k+\sum\limits_{k=1}^{d}\alpha_j+d)}P_kf,
	\end{equation*}
	for $f\in L^2(\mathbb{R}^n)$ and the above expression can be written as 
	\begin{equation*}
		e^{-tL}f=\int_{\mathbb{R}^n}p_t(x,y)f(y)d\mu_\alpha(y),
	\end{equation*}
	where the kernel $p_t(x,y)$ is given by the expansion
	\begin{equation*}
		p_t(x,y)=\sum_{k=0}^\infty e^{-t(2k+\sum\limits_{k=1}^{d}\alpha_j+d)}\sum_{|\beta|=k}\psi_\beta^{(\alpha)}(x)\psi_\beta^{(\alpha)}(y)=\sum_{\beta\in\mathbb{N}^n}e^{-t(2|\beta|+\sum\limits_{k=1}^{d}\alpha_j+d)}\psi_\beta^{(\alpha)}(x)\psi_\beta^{(\alpha)}(y).
	\end{equation*}
	Using Mehler's formula for Laguerre functions, the kernel $p_t(x,y)$ can be expressed explicitly by 
	\begin{align*}
		p_t(x,y)&=\prod_{k=1}^d\frac{2e^{-2t}}{1-e^{-4t}}\exp\left(-\frac{1}{2}\frac{1+e^{-4t}}{1-e^{-4t}}(x_k^2+y_k^2)\right) (x_ky_k)^{-\alpha_k} I_{\alpha_k}\left(\frac{2e^{-2t}}{1-e^{-4t}} x_ky_k\right)
	\end{align*}
	for all $t>0$,$x,y\in\mathbb{R}_+^d$ and $I_{\alpha_k}$ being the bessel funtion. $p_t(x,y)$ enjoys the Gaussion upper bound (A2) (see \cite{Lebedev, BD}),i.e, there exist $C,c>0$ such that for all $x,y\in \mathbb{R}_+^d$ and $t>0$,
	\begin{equation*}
		0<p_t(x,y)\lesssim \frac{C}{\mu_\alpha(B(x,\sqrt{t})}\exp \left(-\frac{|x-y|^2}{ct}\right).
	\end{equation*}
	
	Moreover, the kernel of the Schr\"odinger operator $e^{-itL_\alpha}$ satisfies the following estimate (see \cite{S2013})
	\begin{equation*}
		|p_{it}(x,y)|\lesssim|\sin t|^{-\left(d+\sum\limits_{k=1}^d\alpha_k\right)}\lesssim |t|^{-\left(d+\sum\limits_{k=1}^d\alpha_k\right)},\,\forall |t|\leq\frac{\pi}{2},
	\end{equation*}
	which yields that
	\begin{equation*}
		\|e^{itL_\alpha}f\|_{L^1\to L^\infty}\lesssim |t|^{-\frac{n}{2}},\,\forall |t|\leq\frac{\pi}{2}, \quad\text{where}\quad n=2\left(d+\sum\limits_{k=1}^d\alpha_k\right)>d\geq1.
	\end{equation*}
	Therefore, the Laguerre operator $L_\alpha$ satisfies the conditions (A1) and (A2). The estimates in Theorem \ref{fractional}, \ref{K-G}, \ref{beam} and \ref{fourth} hold true for the Laguerre operator when $d\geq1$.

	\section*{Acknowledgments}The second author is supported by the National Natural Science Foundation of China (Grant No. 11701452) and Guangdong Basic and Applied Basic Research Foundation (No. 2023A1515010656). The third author is supported by the National Natural Science Foundation of China (Grant No. 12171399 and 12271401).

\end{document}